\documentclass[final]{siamltex}

\usepackage{times}
\usepackage{epsfig,graphics,color}
\usepackage{amssymb,amsmath} 
\usepackage[dvips, letterpaper=true, colorlinks=true, linkcolor=red, filecolor=green, citecolor=red, pdfpagemode=None]{hyperref}

\newtheorem{thm}{Theorem}

\newtheorem{lem}[thm]{Lemma}
\newtheorem{remark}[theorem]{Remark}
\newtheorem{defn}[theorem]{Definition}

\title{QUADRATIC SPLINE WAVELETS WITH SHORT SUPPORT SATISFYING HOMOGENEOUS BOUNDARY CONDITIONS \thanks{This work was supported by the SGS project "Wavelets" financed by Technical University of Liberec. The authors would like to thank Radka Szillerov\'a for her help with numerical experiments. }} 

\author{ Dana \v{C}ern\'a \thanks{Department of Mathematics and Didactics of Mathematics,
Technical University in Liberec,
Studentsk\'a 2, 461 17 Liberec, Czech Republic.
(dana.cerna@tul.cz) }
\and V\'aclav Fin\v{e}k \thanks{Department of Mathematics and Didactics of Mathematics,
Technical University in Liberec,
Studentsk\'a 2, 461 17 Liberec, Czech Republic.
(vaclav.finek@tul.cz)}
}

\begin{document}
\maketitle

\begin{abstract}
In the paper, we construct a new quadratic spline-wavelet basis on the interval and a unit square satisfying homogeneous Dirichlet boundary conditions of the first order. Wavelets have one vanishing moment and the shortest support among known quadratic spline wavelets adapted to the same type of boundary conditions. Stiffness matrices arising from a discretization of the second-order elliptic problems using the constructed wavelet basis have uniformly bounded condition numbers and the condition numbers are small. We present quantitative properties of the constructed basis. We provide numerical examples to show that the Galerkin method and the adaptive wavelet method using our wavelet basis requires smaller number of iterations than these methods with other quadratic spline wavelet bases. Moreover, due to the short support of the wavelets one iteration requires smaller number of floating point operations. 
\end{abstract}

\begin{keywords} wavelet, quadratic spline, homogeneous Dirichlet boundary conditions, condition number,  elliptic problem \end{keywords}

\begin{AMS} 46B15, 65N12, 65T60 \end{AMS}

\pagestyle{myheadings}
\thispagestyle{plain}
\markboth{D. \v{C}ERN\'A AND V. FIN\v{E}K }{QUADRATIC SPLINE WAVELETS WITH SHORT SUPPORT}

\section{Introduction}
\label{intro}

Wavelets are a powerful tool in signal analysis, image processing, and engineering applications. They are also used for numerical solution of various types of equations. 
Wavelet methods are used especially for preconditioning of systems of linear algebraic equations
arising from the discretization of elliptic problems, adaptive solving of operator equations,
solving of certain type of partial differential equations with a dimension independent convergence rate, and a sparse representation of operators. 

The quantitative properties of any wavelet method strongly depend on the used wavelet basis, namely
on its condition number, the length of the support of wavelets, the number of vanishing wavelet moments and a smoothness of basis functions. Therefore, a construction of appropriate wavelet basis is an important issue. 

In this paper, we construct a quadratic spline wavelet basis on the interval and on the unit square that is well-conditioned and adapted to homogeneous Dirichlet boundary conditions of the first order. The wavelets have one vanishing moment and the shortest possible support. Furthermore, up to our knowledge the support is the shortest among all known quadratic spline wavelets. The condition numbers of the stiffness matrices arising from the discretization of elliptic problems using the constructed basis are uniformly bounded and small. 
Let $\Omega_d=\left(0,1\right)^d$, $d=1,2$. The wavelet basis of the space $H_0^1 \left(\Omega_2 \right)$ is then obtained by an isotropic tensor product. More precisely, our aim is to propose a wavelet basis on $\Omega_d$ that satisfies the following properties:

\begin{enumerate}
\item[-] {\it Riesz basis property.} We construct Riesz bases of the space $H_0^1\left( \Omega_d \right)$.  
\item[-] {\it Locality.} The primal basis functions are local in the sense of Definition~\ref{def_wavelet_basis}.  
\item[-] {\it Vanishing moments.} The wavelets have one vanishing moment. 
\item[-] {\it Polynomial exactness.} Since the scaling basis functions are quadratic B-splines, the primal multiresolution analysis has polynomial exactness of order three. 
\item[-] {\it Short support.} The wavelets have the shortest possible support among quad\-ratic spline wavelets with one vanishing moment. 
\item[-] {\it Closed form.} The primal scaling functions and wavelets have an explicit expression. 
\item[-] {\it Homogeneous Dirichlet boundary conditions.} The wavelet basis satisfies homogeneous Dirichlet boundary conditions of the first order.
\item[-] {\it Well-conditioned bases.} The wavelet basis is well-conditioned with respect to the $H^1 \left( \Omega_d \right)$-seminorm. 
\end{enumerate} 


In \cite{Dahmen2000, Dahmen1999}, a construction of a spline-wavelet biorthogonal
wavelet basis on the interval was proposed. Both the primal and dual wavelets are local. A disadvantage of these bases was their relatively large condition number. 

Therefore many modifications of this construction were
proposed \cite{Barsch2001, Bittner2006, Burstedde2005, Talocia2000}. The construction in \cite{Primbs2010} outperforms the previous constructions for the linear and quadratic spline-wavelet bases with respect to conditioning of the wavelet bases.
In \cite{Cerna2011, Cerna2012, Dijkema2009} the construction was significantly improved also for
cubic spline wavelet basis. 

Spline wavelet bases with nonlocal duals were also constructed and adapted to various types of boundary conditions \cite{ Cerna2014a, Cerna2014b, Cerna2015, Chui1992, Jia2006, Jia2009, Jia2011, Stevenson2010}. The main advantage of these types of bases in comparison to bases with local duals are usually the shorter support of wavelets, the lower condition number of the basis and the corresponding stiffness matrices and the simplicity of the construction.

Wavelet bases of the same type as the basis in this paper are bases from \cite{Cerna2011, Dijkema2009, Primbs2010}.
The constructions from \cite{Cerna2011} and \cite{Primbs2010} lead to the same basis in the case of quadratic spline wavelet bases adapted to homogeneous Dirichlet boundary conditions of the first order. Therefore in Section~5 we compare our basis with bases from \cite{Dijkema2009, Primbs2010}.

\section{Wavelet basis on the interval}

First, we briefly review a definition of a wavelet basis, for more details about wavelet bases see \cite{Urban2009}. 
Let $H$ be a Hilbert space with the inner product $\left\langle \cdot, \cdot \right\rangle_H$ and the norm $\left\| \cdot \right\|_H$. Let $\left\langle \cdot, \cdot \right\rangle$ and $\left\| \cdot \right\|$ denote
the $L^2$-inner product and the $L^2$-norm, respectively.
Let $\mathcal{J}$ be some index set and let each index $\lambda \in \mathcal{J}$ take the form $\lambda = \left(j,k\right)$, where $\left| \lambda \right|:=j \in \mathbb{Z}$ is a {\it scale}. We define
\begin{equation*}
\left\| \mathbf{v} \right\|_2:= \sqrt{ \sum_{\lambda \in \mathcal{J}} 
 v_{\lambda}^2}, \quad {\rm for} \quad \mathbf{v}=\left\{ v_{\lambda} \right\}_{\lambda \in \mathcal{J}}, \, v_{\lambda} \in \mathbb{R},
\end{equation*}
and
\begin{equation*}
l^2 \left( \mathcal{J} \right):=\left\{ \mathbf{v}: \mathbf{v}=\left\{ v_{\lambda} \right\}_{\lambda \in \mathcal{J}}, \, v_{\lambda} \in \mathbb{R},  \left\| \mathbf{v} \right\|_2 < \infty \right\}.
\end{equation*}

Our aim is to construct a wavelet basis of $H$ in the sense of the following definition.

\begin{defn} \label{def_wavelet_basis}
A family $\Psi:=\left\{  \psi_{\lambda}, \lambda \in \mathcal{J}  \right\}$ 
is called a {\it wavelet basis} of  $H$, if
\begin{itemize}
\item[$i)$] $\Psi$ is a {\it Riesz basis} for $H$, i.e. the closure of the span of $\Psi$ is $H$
 and there exist constants $c,C \in \left( 0, \infty \right)$ such that 
\begin{equation} \label{riesz}
c \left\| \mathbf{ b} \right\|_2 \leq \left\| \sum_{\lambda \in \mathcal{J}} b_{\lambda} \psi_{\lambda} \right\|_H \leq C \left\| \mathbf{b} \right\|_2, 
\end{equation}
for all $\mathbf{b}:=\left\{ b_{\lambda} \right\}_{\lambda \in \mathcal{J}}\in l^2\left(\mathcal{J}\right)$.
\item[$ii)$] The functions are {\it local} in the sense that $\rm \mbox{diam} \ supp \ \psi_{\lambda} \leq C 2^{-\left| \lambda \right|}$ for all $\lambda \in \mathcal{J}$, and at a given level $j$ the supports of only finitely many wavelets
overlap at any point $x$. 
\end{itemize}
\end{defn}

For the two countable sets of functions $\Gamma, \Theta \subset H$ , the symbol $\left\langle \Gamma, \Theta  \right\rangle_H$ denotes the matrix 
\begin{equation*}
\left\langle \Gamma, \Theta \right\rangle_H := \left\{ \left\langle \gamma, \theta \right\rangle_H \right\}_{\gamma \in \Gamma, \theta \in \Theta }.
\end{equation*} 

\begin{remark} \label{remark1}
The constants 
\begin{equation*}
c_{\Psi}:=\text{sup} \left\{c: c \ \text{satisfies} \ (\ref{riesz}) \right\} \quad {\rm and} \quad C_{\Psi}:=\text{inf} \left\{C: C \ \text{satisfies} \ (\ref{riesz}) \right\}
\end{equation*}
are called {\it Riesz bounds} and the number $cond \ \Psi = C_{\Psi}/c_{\Psi}$ is called the {\it condition number} of~$\Psi$.
It is known that the constants $c_{\Psi}$ and $C_{\Psi}$ satisfy:
\begin{equation*}
c_{\Psi}= \sqrt{ \lambda_{min} \left( \left\langle \Psi, \Psi \right\rangle_H \right)}, \quad C_{\Psi}= \sqrt{ \lambda_{max} \left( \left\langle \Psi, \Psi \right\rangle_H \right)},
\end{equation*}
where $\lambda_{min} \left( \left\langle \Psi, \Psi \right\rangle_H \right)$ and $\lambda_{max} \left( \left\langle \Psi, \Psi \right\rangle_H \right)$ are the smallest and the largest eigenvalues of the matrix $\left\langle \Psi, \Psi \right\rangle_H$, respectively. 
\end{remark}

We define a scaling basis as a basis of quadratic B-splines in the same way as in Ref. \cite{Cerna2011, Chui1992, Primbs2010}. 
Let $\phi$ be a quadratic B-spline defined on knots $[0, 1, 2, 3]$.  It can be written explicitly as:
\begin{equation*} 
\phi(x) = \left\{ \begin{array}{cl}
\frac{x^2}{2}, & x \in [0,1], \\
-x^2+3x-\frac{3}{2}, & x \in [1,2], \\
\frac{x^2}{2}-3x+\frac{9}{2}, & x \in [2,3], \\
0, & {\rm otherwise}. \\
\end{array} \right. \end{equation*} 
The function $\phi$ satisfies a scaling equation \cite{Chui1992} 
\begin{equation} \label{primarni_skalova_rce}
\phi\left(x\right)=\frac{\phi\left(2x\right)}{4}+\frac{3 \phi\left(2x-1\right)}{4}+\frac{3\phi\left(2x-2\right)}{4}
+\frac{\phi\left(2x-3\right)}{4}.
\end{equation}
Let $\phi_b$ be a quadratic B-spline defined on knots $[0, 0, 1, 2]$, then
\begin{equation*} 
\phi_b(x) = \left\{ \begin{array}{cl}
-\frac{9 x^2}{4} + 3x, & x \in [0,1], \\
\frac{3x^2}{4}-3x+3, & x \in [1,2], \\
0, & {\rm otherwise}. \\
\end{array} \right. \end{equation*} 
The function $\phi_b$ satisfies a scaling equation \cite{Chui1992}
\begin{equation} \label{okrajova_skalova_rce}
\phi_b \left(x\right)=\frac{\phi_b\left(2x\right)}{2}+\frac{9 \phi\left(2x\right)}{8}+\frac{ 3 \phi\left(2x-1\right)}{8}.
\end{equation}
The graphs of the functions $\phi_b$ and $\phi$ are displayed in Figure~\ref{cerna_fig1}.
\begin{figure} [ht!] 
\includegraphics[height=.2\textheight]{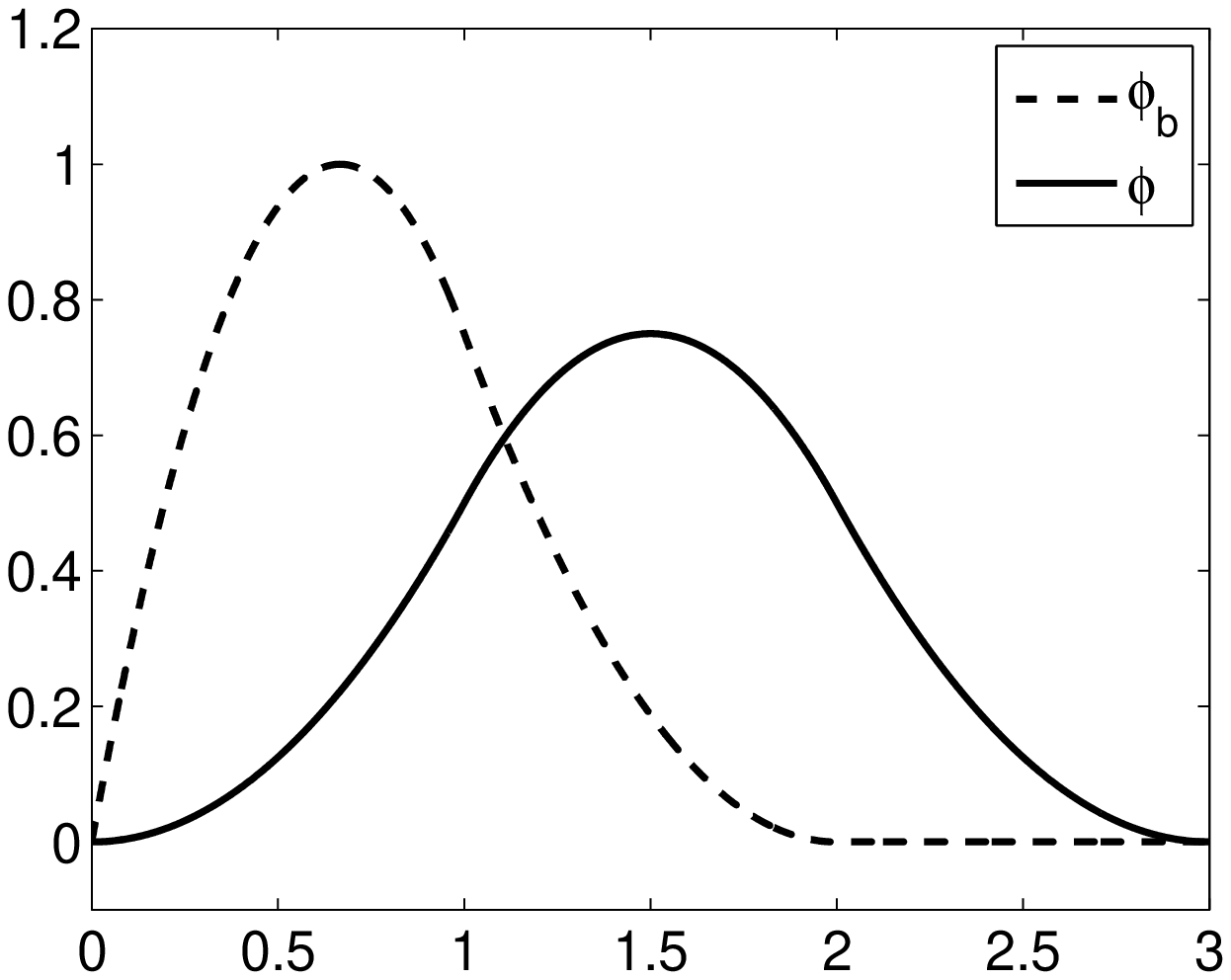}
\includegraphics[height=.2\textheight]{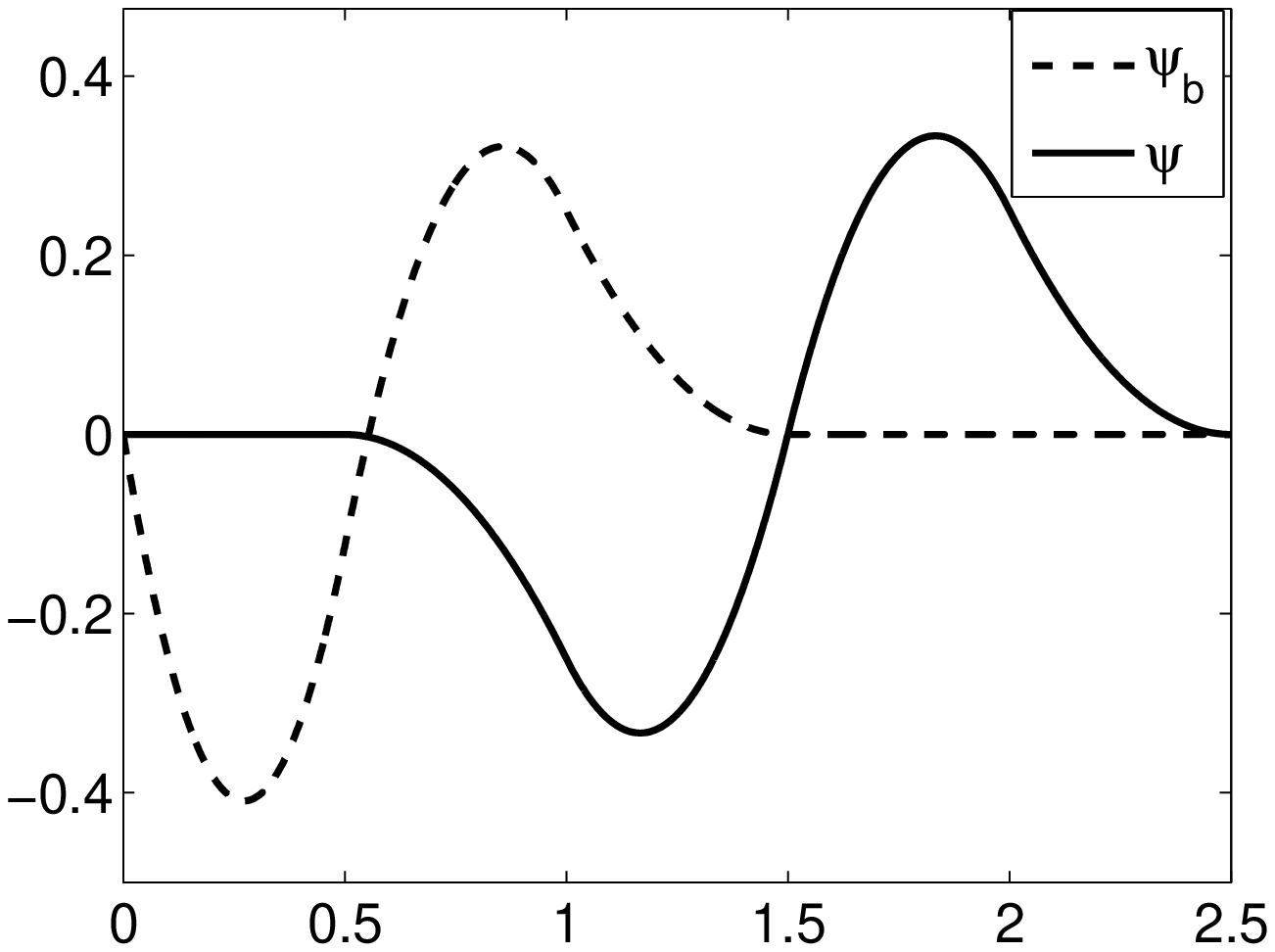}
\caption{The scaling functions $\phi$ and $\phi_b$ and the wavelets $\psi$ and $\psi_b$.}
\label{cerna_fig1}
 \end{figure}
For $j \geq 2$ and $x \in \left[0,1\right]$ we set 
\begin{eqnarray} \label{dilatacephi}
\phi_{j,k}(x) &=& 2^{j/2}\phi(2^j x-k+2),  k = 2,...,2^j-1, \\
\nonumber \phi_{j,1}(x) &=& 2^{j/2} \phi_b (2^j x),  \quad  \phi_{j,2^j}(x) =  2^{j/2} \phi_b (2^j (1-x)).
\end{eqnarray} 
We define a wavelet $\psi$ and a boundary wavelet $\psi_b$ as
\begin{equation} \label{waveletova_rce}
\psi(x) = - \frac{1}{2} \phi(2x-1)+\frac{1}{2}\phi(2x-2) \quad {\rm and} \quad \psi_b(x) = \frac{ - \phi_b(2x) }{2} + \frac{\phi(2x)}{2}.
\end{equation}
Then ${\rm supp} \, \psi = \left[0.5, 2.5 \right]$, ${\rm supp} \, \psi_b = \left[0, 1.5 \right]$, 
and both wavelets have one vanishing moment, i.e.
\begin{equation*}
\int_{-\infty}^{\infty} \psi (x) dx =0 \quad {\rm and} \quad \int_{-\infty}^{\infty} \psi_b (x) dx =0. 
\end{equation*}
The graphs of the wavelet $\psi$ and the boundary wavelet $\psi_b$ are displayed in Figure~\ref{cerna_fig1}. 
For $j \geq 2$ and $x \in \left[0,1 \right]$ we define
\begin{eqnarray} \label{dilatacepsi}
\psi_{j,k}(x) &=& 2^{j/2}\psi(2^j x-k+2),  k = 2,...,2^j-1, \\
\nonumber \psi_{j,1}(x) &=& 2^{j/2}\psi_b (2^j x),  \quad  \psi_{j,2^j}(x) =  -2^{j/2}\psi_b (2^j (1-x)).
\end{eqnarray}  
We denote the index sets by
\begin{equation*}
\mathcal{I}_j = \left\{ k \in \mathbb{Z}: 1 \leq k \leq 2^j\right\}.
\end{equation*}
We define 
\begin{equation*}
\Phi_j = \left\{\phi_{j,k}, k \in \mathcal{I}_j \right\}, \quad
\Psi_j = \left\{\psi_{j,k}, k \in \mathcal{I}_j  \right\},
\end{equation*}
and
\begin{equation} \label{definition_Psi}
\Psi=\Phi_2 \cup \bigcup_{j=2}^{\infty} \Psi_j, \quad \Psi_s=\Phi_2 \cup \bigcup_{j=2}^{1+s} \Psi_j.
\end{equation}
In Section \ref{sectionRB} we prove that $\Psi$, when normalized with respect to the $H^1$--seminorm,
forms a wavelet basis of the Sobolev space $H_0^1 \left(0,1 \right)$. 

\section{Refinement matrices} \label{section_refinement_matrices}
By (\ref{primarni_skalova_rce}), (\ref{okrajova_skalova_rce}), (\ref{dilatacephi}), (\ref{waveletova_rce}) and (\ref{dilatacepsi}), there exist
{\it refinement matrices} $\mathbf{M}_{j,0}$ and $\mathbf{M}_{j,1}$ 
such that
\begin{equation} \label{ref_matrices}
\Phi_j = \mathbf{M}_{j,0}^T \Phi_{j+1}, \quad \Psi_j = \mathbf{M}_{j,1}^T \Phi_{j+1}.
\end{equation}
In these formulas we view the sets $\Phi_j$ and $\Psi_j$ as column
vectors with entries $\phi_{j,k}$ and $\psi_{j,k}$, $k \in \mathcal{I}_j$,
respectively.

Due to (\ref{primarni_skalova_rce}) and (\ref{okrajova_skalova_rce}), the refinement matrix $\mathbf{M}_{j,0}$ has the following structure: 
\begin{equation*}
\mathbf{M}_{j,0} = \left[ \begin{array}{c|c|c}
 & \multicolumn{2}{c}{} \\
\cline{2-2} \mathbf{M}_L  &  & \vspace{-3mm} \\ 
            & & \\
\cline{1-1} & \hspace{3mm} \mathbf{M}_{j,0}^I \hspace{3mm} & \\
\cline{3-3} & &   \\ 
\cline{2-2} \multicolumn{2}{c|}{} &  \mathbf{M}_R   \vspace{-3mm} \\ 
    \multicolumn{2}{c|}{} &   
\end{array} \right].
\end{equation*}
where $\mathbf{M}_{j,0}^I $ is a $2^{j+1} \times 2^j $ matrix given by
\begin{equation*}
\left( \mathbf{M}_{j,0}^I  \right)_{m,n} = \begin{cases} \frac{ h_{m+2-2n} }{\sqrt{2}}, & n=1, \ldots, 2^j, \ 1 \leq m+2-2n \leq 4, \\
 0, & {\rm otherwise},
\end{cases}
\end{equation*} 
where 
\begin{equation*}
\mathbf{h}=\left[h_1, h_2, h_3, h_4  \right]= \left[ \frac{1}{4}, \frac{3}{4}, \frac{3}{4}, \frac{1}{4} \right]
\end{equation*}
is a vector of coefficients from the scaling equation (\ref{primarni_skalova_rce}).
The matrix $\mathbf{M}_L$ is given by
\begin{equation*}
\mathbf{M}_L = \frac{1}{\sqrt{2}}  \mathbf{h}_b^T, \quad {\rm where} \quad \mathbf{h}_b=\left[h_1^b, h_2^b, h_3^b \right]= \left[ \frac{1}{2}, \frac{9}{8}, \frac{3}{8} \right]
\end{equation*}
is a vector of coefficients from the scaling equation (\ref{okrajova_skalova_rce}).
The matrix $\mathbf{M}_R$ is obtained from a matrix
$\mathbf{M}_L$ by reversing the ordering of rows.

It follows from (\ref{waveletova_rce}) that the matrix $\mathbf{M}_{j,1}$ is of the size
$2^{j+1} \times 2^j$ and has the structure
\begin{equation} \label{waveletova_mce}
\mathbf{M}_{j,1}=   
\frac{1}{\sqrt{2}} \left[ \begin{array}{c c c c c c c c c c}
- \frac{1}{2} & \frac{1}{2} & 0& 0 & 0 & 0 & 0 & \ldots & 0 & 0 \\ \vspace{1mm}
 0 & 0 & -\frac{1}{2} & \frac{1}{2} & 0 & 0 & 0 &  \ldots & 0 & 0 \\ \vspace{1mm}
 0 & 0 & 0& 0 & -\frac{1}{2} & \frac{1}{2} &  & & 0 & 0\\ \vspace{1mm}
 \vdots & \vdots &  & & & & & &  & \vdots\\
 0 & 0 & \ldots & 0 & 0 & 0 & -\frac{1}{2} & \frac{1}{2} & 0 & 0\\ \vspace{1mm}
 0 & 0 & \ldots & 0 & 0 & 0 & 0 & 0 & -\frac{1}{2} & \frac{1}{2}
\end{array} \right]^T.
\end{equation}
%
The following lemmas are crucial for the proof of a Riesz basis property.

\begin{lem} \label{explicit_Mj0} 
Let $j \geq 2$ and the entries $\tilde{M}_{k,l}^{j,0}$, $k \in \mathcal{I}_{j+1}$, $l \in \mathcal{I}_{j}$, of the matrix $\tilde{\mathbf{M}}_{j,0}$ be given by:
\begin{eqnarray} \label{mkl_explicit}
\tilde{M}^{j,0}_{2,l} &=& \tilde{M}^{j,0}_{1,l} = \frac{ d^j_1}{a^{\left|1-l\right|}}+\frac{ d^j_n}{a^{\left|n-l\right|}}, \\
\nonumber \tilde{M}^{j,0}_{2^{j+1},l} &=& \tilde{M}^{j,0}_{2^{j+1} - 1, l} = \frac{ d^j_1}{a^{\left|n-l\right|}}+\frac{ d^j_n}{a^{\left|1-l\right|}},
\end{eqnarray}
where $n=2^j$, $a=-3-2\sqrt{2}$, 
\begin{eqnarray} \label{def_djk}
d^j_1 &=& \frac{ 6 \alpha_n}{3 + \sqrt{2} }, \quad d^j_n = \frac{ -36 b \, \alpha_n \, a^{2-n} }{11+ 6 \sqrt{2} }, \\
\nonumber \alpha_n &=& \left( 1-\frac{36 \, b^2 \, a^{4-2n} }{11 + 6 \sqrt{2} } \right)^{-1}, \quad
 b=\frac{13-9 \sqrt{2}}{6},
\end{eqnarray}
and for  $k=2, \ldots, n-1$ and $l \in \mathcal{I}_j$ let
\begin{equation} \label{mkl_explicit2}
\tilde{M}^{j,0}_{2k,l} \! = \! \tilde{M}^{j,0}_{2k-1,l} = \frac{ 1 }{a^{\left|k-l\right|}} \! + \frac{ d^j_k}{a^{\left|1-l\right|}} + \frac{  d^j_{n+1-k}}{a^{\left|n-l\right|}},
\end{equation}
where 
\begin{equation} \label{def_djk_2}
 d^j_k = \frac{ -6b  \, \alpha_n \, a^{2-k} }{ 3+ \sqrt{2}}- \frac{ 36 b \, \alpha_n \, a^{k+3-2n}}{ 11 + 6 \sqrt{2}}. 
\end{equation}
Then \begin{equation} \label{algbior1}
\mathbf{M}_{j,0}^T \tilde{\mathbf{M}}_{j,0} = \mathbf{I}_j, \quad and \quad
\mathbf{M}_{j,1}^T \tilde{\mathbf{M}}_{j,0} = \mathbf{0}_j,
\end{equation}
where $\mathbf{I}_j$ denotes the identity matrix and $\mathbf{0}_j$ denotes the zero matrix of the appropriate size.
\end{lem}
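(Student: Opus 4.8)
The plan is to verify the two matrix identities in (\ref{algbior1}) directly from the explicit entries, disposing of the second identity first because it is essentially free, and then concentrating on the first, which carries all the work.

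For $\mathbf{M}_{j,1}^T \tilde{\mathbf{M}}_{j,0} = \mathbf{0}_j$ I would read off from (\ref{waveletova_mce}) that the $k$-th column of $\mathbf{M}_{j,1}$ has only the two nonzero entries $-\tfrac{1}{2\sqrt{2}}$ and $\tfrac{1}{2\sqrt{2}}$ in rows $2k-1$ and $2k$. Hence the $(k,l)$ entry of the product equals $\tfrac{1}{2\sqrt{2}}\bigl( \tilde{M}^{j,0}_{2k,l} - \tilde{M}^{j,0}_{2k-1,l} \bigr)$, which vanishes because the defining formulas (\ref{mkl_explicit}) and (\ref{mkl_explicit2}) make the two rows of every pair equal, i.e. $\tilde{M}^{j,0}_{2k,l} = \tilde{M}^{j,0}_{2k-1,l}$. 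This settles the second identity with no computation.

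The heart of the proof is $\mathbf{M}_{j,0}^T \tilde{\mathbf{M}}_{j,0} = \mathbf{I}_j$. Writing $g_l(k):=\tilde{M}^{j,0}_{2k-1,l}=\tilde{M}^{j,0}_{2k,l}$ for the common value of the $k$-th pair, the row pairing together with the symmetric four-tap mask $\mathbf{h}=[\tfrac14,\tfrac34,\tfrac34,\tfrac14]$ collapses the sum defining an interior row $p$ of the product to the three-point combination $\tfrac{1}{\sqrt2}\bigl(\tfrac14 g_l(p-1)+\tfrac32 g_l(p)+\tfrac14 g_l(p+1)\bigr)$, the two equal middle taps acting on the same pair and the outer taps on the neighbouring pairs. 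The decisive observation is that $a=-3-2\sqrt2$ is a root of $z^2+6z+1=0$, so $a$ and $a^{-1}$ are exactly the two characteristic roots of the stencil $\tfrac14(z^2+6z+1)$; consequently every term of the form $a^{\pm k}$ is annihilated by this stencil. The boundary-correction parts of $g_l(k)$, namely $d^j_k/a^{|1-l|}$ and $d^j_{n+1-k}/a^{|n-l|}$, are precisely linear combinations of $a^{k}$ and $a^{-k}$ by (\ref{def_djk}) and (\ref{def_djk_2}), hence drop out in every interior row, while the particular part $a^{-|k-l|}$ produces $\tfrac14 a^{-|p-l|-1}(a^2+6a+1)=0$ off the diagonal and the value $\tfrac32+\tfrac12 a^{-1}=\sqrt2$ on the diagonal, which the prefactor $\tfrac{1}{\sqrt2}$ normalizes to $1$. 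This disposes of all interior rows at once.

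What remains, and what I expect to be the main obstacle, is the boundary bookkeeping. For the first and last rows of the product the interior stencil is replaced by the boundary mask $\mathbf{h}_b=[\tfrac12,\tfrac98,\tfrac38]$ coming from (\ref{okrajova_skalova_rce}), and the pairs $k=1$ and $k=n$ carry the separate formulas (\ref{mkl_explicit}); here the geometric tails no longer cancel automatically. One has to check that the constants $d^j_1$, $d^j_n$, $b=\tfrac{13-9\sqrt2}{6}$ and the normalizing factor $\alpha_n$ are chosen exactly so that the boundary rows, and the near-boundary rows $p=2$ and $p=n-1$ where the interior stencil first meets a boundary pair, evaluate to the correct unit vectors; note that $3+\sqrt2$ and $11+6\sqrt2=(3+\sqrt2)^2$ appearing in the denominators are the natural boundary quantities attached to $a$. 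The factor $\alpha_n$ absorbs the coupling between the two ends of the interval, governed by the exponentially small powers $a^{4-2n}$. I would verify the two boundary equations at each end, use $\alpha_n$ to correct the cross term, and keep all geometric sums in closed form by repeated use of $a^2+6a+1=0$; the only genuinely delicate point is the careful tracking of these $O(a^{-n})$ cross terms.
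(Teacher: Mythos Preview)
Your approach is correct but genuinely different from the paper's. Both proofs begin identically: the row-pairing $\tilde{M}^{j,0}_{2k-1,l}=\tilde{M}^{j,0}_{2k,l}$ forced by $\mathbf{M}_{j,1}$ collapses the first identity to an $n\times n$ system $\mathbf{A}_j\mathbf{B}_j=\mathbf{I}_j$ with the tridiagonal matrix $\mathbf{A}_j$ whose interior stencil is $\tfrac{1}{\sqrt2}[\tfrac14,\tfrac32,\tfrac14]$. From there you verify the given formulas directly, using that $a$ and $a^{-1}$ are the roots of $z^2+6z+1$ so that the stencil annihilates every geometric term and produces $\delta_{p,l}$ on the particular part $a^{-|k-l|}$; the remaining boundary rows $p\in\{1,2,n-1,n\}$ are then checked by hand. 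The paper instead \emph{derives} the formulas: it factors $\mathbf{A}_j=\mathbf{H}_j\mathbf{C}_j\mathbf{D}_j$ with $\mathbf{H}_j$ diagonal, $\mathbf{C}_j$ the symmetric tridiagonal matrix whose inverse is exactly $(\mathbf{C}_j^{-1})_{k,l}=a^{-|k-l|}$, and $\mathbf{D}_j$ an identity-plus-two-columns boundary correction whose inverse supplies the coefficients $d^j_k$. Your route is more elementary and makes the role of the characteristic polynomial transparent; the paper's factorization explains \emph{why} the formulas look as they do and, more importantly, is reused verbatim in the proof of Lemma~\ref{norm_Mj0} to bound $\|\tilde{\mathbf{M}}_{j,0}\|_2$ via $\|\mathbf{G}_j\|_2\|\mathbf{D}_j^{-1}\|_2\|\mathbf{C}_j^{-1}\|_2\|\mathbf{H}_j^{-1}\|_2$. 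So while your verification suffices for the present lemma, you would need a separate argument for that later norm estimate, whereas the paper gets it essentially for free from the same decomposition.
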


\begin{proof}
By similar approach as in \cite{Cerna2014a, Cerna2014b} we derive the explicit form of the entries $\tilde{M}_{k,l}^{j,0}$, $k \in \mathcal{I}_{j+1}$, $l \in \mathcal{I}_{j}$, of the matrix $\tilde{\mathbf{M}}_{j,0}$ such that $(\ref{algbior1})$ is satisfied. From $(\ref{waveletova_mce})$ we obtain 
\begin{equation} \label{mkl}
\tilde{M}_{2k-1,l}= \tilde{M}_{2k,l}, \quad {\rm for} \, k=1, \ldots, 2^{j}.
\end{equation}

We substitute (\ref{mkl}) into $(\ref{algbior1})$ and we obtain
a new system $\mathbf{A}_j \mathbf{B}_j = \mathbf{I}_j$, where
\begin{equation*} 
\mathbf{A}_j=\frac{1}{\sqrt{2}} \left[ \begin{array}{c c c c c c }
 \frac{13}{8} & \frac{3}{8} & 0& \ldots & & 0 \\ \vspace{1mm}
 \frac{1}{4} & \frac{3}{2} & \frac{1}{4} & & & \vdots \\ \vspace{1mm}
 0 & \frac{1}{4} & \frac{3}{2}& \frac{1}{4} & & 0 \\ 
 \vdots & & \ddots & \ddots & \ddots & \\ \vspace{1mm} 
 0 &  & & \frac{1}{4} & \frac{3}{2}  & \frac{1}{4} \\ 
 0 & & \ldots & 0& \frac{3}{8}  & \frac{13}{8} 
\end{array} \right] = \frac{\mathbf{H}_j}{\sqrt{2}} \left[ \begin{array}{c c c c c c }
 \frac{13}{12} & \frac{1}{4} & 0& \ldots & & 0 \\ \vspace{1mm}
 \frac{1}{4} & \frac{3}{2} & \frac{1}{4} & & & \vdots \\ \vspace{1mm}
 0 & \frac{1}{4} & \frac{3}{2}& \frac{1}{4} & & 0 \\ 
 \vdots & & \ddots & \ddots & \ddots & \\ \vspace{1mm} 
 0 &  & & \frac{1}{4} & \frac{3}{2}  & \frac{1}{4} \\ 
 0 & & \ldots & 0& \frac{1}{4}  & \frac{13}{12} 
\end{array} \right],
\end{equation*}
where 
\begin{equation*}
\left( \mathbf{H}_j \right)_{k,l} = \begin{cases} \frac{3}{2}, & \left(k,l\right)=\left(1,1\right), \,  \left(k,l\right)=\left(2^j,2^j\right) \\
1, &  k=l, k \neq 1, k \neq 2^j, \\
0, & {\rm otherwise},
\end{cases}
\end{equation*}
and $\mathbf{B}_j$ is the $2^j \times 2^j$ matrix with entries
$B^j_{k,l}=\tilde{M}^{j,0}_{2k,l}$, $k, l \in \mathcal{I}_j$. 
We factorize the matrix $\mathbf{A}_j$ as $\mathbf{A}_j=\mathbf{H}_j \mathbf{C}_j \mathbf{D}_j$, where
\begin{equation*}
\mathbf{C}_j=\frac{1}{\sqrt{2}} \left[ \begin{array}{c c c c c c }
 \frac{3+2\sqrt{2}}{4} & \frac{1}{4} & 0& 0 & \ldots & 0 \\ \vspace{1mm}
 \frac{1}{4} & \frac{3}{2} & \frac{1}{4} & & & \vdots\\ \vspace{1mm}
 0 & \frac{1}{4} & \frac{3}{2}& \frac{1}{4} & & 0\\
 \vdots & & \ddots & \ddots & \ddots & \\ \vspace{1mm}
 0 & & & \frac{1}{4} & \frac{3}{2}  & \frac{1}{4}\\ \vspace{1mm}
  0 & \ldots & 0 & 0& \frac{1}{4}  & \frac{3+2\sqrt{2}}{4}\\ 
\end{array} \right], 
\end{equation*}
and 
\begin{equation*}
\mathbf{D}_j= \left[ \begin{array}{c c c c c c c }
 \frac{3+\sqrt{2}}{ 6} & 0 & 0 & \ldots & 0 & 0 & \frac{b}{ a^{n-2} } \\ \vspace{1mm}
 b & 1 &  0& & 0 & 0 & \frac{b}{ a^{n-3} } \\ \vspace{1mm}
 \frac{b}{a } & 0 & 1 &   & 0 & 0&  \frac{b}{ a^{n-4} }\\\vspace{1mm}
 \vdots & \vdots & & \ddots &  & \vdots & \vdots \\ \vspace{1mm}
 \frac{b}{ a^{n-4} }  & 0 & 0 & & 1 & 0 & \frac{b}{a} \\ \vspace{1mm}
 \frac{b}{ a^{n-3} }  & 0 & 0 & & 0 & 1 & b \\ \vspace{1mm}
  \frac{b}{ a^{n-2} } & 0 & 0 & \ldots & 0  & 0 & \frac{3 + \sqrt{2} }{6}\\ 
\end{array} \right],
\end{equation*}
More precisely, the entries $D^j_{k,l}$ of the matrix $\mathbf{D}_j$ are given by:
\begin{eqnarray*}
D^j_{1,1} &=& D^j_{n,n}= \frac{3 + \sqrt{2}}{ 6 }, \\
\nonumber D^j_{k,1} &=& D^j_{n+1-k,n} = \frac{b}{ a^{k-2}}, \quad {\rm for} \, k=2, \ldots, n, \\
\nonumber D^j_{k,k} &=& 1, \quad {\rm for} \, k=2, \ldots, n-1, \\
\nonumber D^j_{k,l} &=& 0, \quad {\rm otherwise}.
\end{eqnarray*}
It is easy to verify that $\tilde{\mathbf{C}}_j=\mathbf{C}_j^{-1}$ has entries
$\tilde{C}^j_{k,l}= a^{ - \left| k-l \right|}$,
and 
the matrix $\mathbf{D}_j^{-1}$ has the structure:
\begin{equation*}
\mathbf{D}_j^{-1}= \left[ \begin{array}{c c c c c }
 d^j_1 & 0 & \ldots & 0 & d^j_n  \\
 d^j_2 & 1 &  & 0 & d^j_{n-1} \\
 \vdots & & \ddots &  & \vdots \\
 d^j_{n-1} & 0 &  & 1 & d^j_2 \\
  d^j_n & 0 & \ldots & 0  & d^j_1 \\
\end{array} \right],
\end{equation*}
with $d^j_k$ given by (\ref{def_djk}) nad (\ref{def_djk_2}). 
Since the matrices $\mathbf{C}_j$, $\mathbf{D}_j$ and $\mathbf{H}_j$ are invertible, we can define 
\begin{equation} \label{def_bj}
\mathbf{B}_j = \mathbf{A}_j^{-1}=\mathbf{D}_j^{-1} \mathbf{C}_j^{-1} \mathbf{H}_j^{-1} . 
\end{equation}
Substituting (\ref{def_bj}) into $(\ref{mkl})$ the lemma is proved. 
$\qquad$ \end{proof}

\begin{lem}
There exist unique matrices $\tilde{\mathbf{M}}_{j,1}$, $j \geq 2$, such that
\begin{equation} \label{algbior2}
\mathbf{M}_{j,0}^T \tilde{\mathbf{M}}_{j,1} = \mathbf{0}_j, \quad and \quad
\mathbf{M}_{j,1}^T \tilde{\mathbf{M}}_{j,1} = \mathbf{I}_j.
\end{equation}
\end{lem}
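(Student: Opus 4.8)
The plan is to recast the two matrix identities in (\ref{algbior2}) as a single square linear system and to reduce the whole statement to the invertibility of the combined refinement matrix. Collecting $\mathbf{M}_{j,0}$ and $\mathbf{M}_{j,1}$ into one $2^{j+1} \times 2^{j+1}$ matrix $\mathbf{M}_j=\left[ \mathbf{M}_{j,0} \mid \mathbf{M}_{j,1} \right]$, the pair of conditions in (\ref{algbior2}) is equivalent to the block system
\begin{equation*}
\mathbf{M}_j^T \tilde{\mathbf{M}}_{j,1} = \begin{bmatrix} \mathbf{0}_j \\ \mathbf{I}_j \end{bmatrix}.
\end{equation*}
Here $\tilde{\mathbf{M}}_{j,1}$ is the unknown $2^{j+1} \times 2^j$ matrix and the coefficient matrix $\mathbf{M}_j^T$ is square. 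Hence existence and uniqueness of $\tilde{\mathbf{M}}_{j,1}$ follow at once provided $\mathbf{M}_j$ (equivalently $\mathbf{M}_j^T$) is invertible, in which case $\tilde{\mathbf{M}}_{j,1}=\left(\mathbf{M}_j^T\right)^{-1}\left[ \mathbf{0}_j ; \mathbf{I}_j \right]$ is the unique solution.

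The core of the proof is therefore to show that the columns of $\mathbf{M}_j$ are linearly independent. I would argue directly: suppose $\mathbf{M}_{j,0}\mathbf{x}+\mathbf{M}_{j,1}\mathbf{y}=\mathbf{0}$ for some coefficient vectors $\mathbf{x},\mathbf{y}\in\mathbb{R}^{2^j}$. Transposing the two relations of Lemma~\ref{explicit_Mj0} gives $\tilde{\mathbf{M}}_{j,0}^T \mathbf{M}_{j,0}=\mathbf{I}_j$ and $\tilde{\mathbf{M}}_{j,0}^T \mathbf{M}_{j,1}=\mathbf{0}_j$. Multiplying the assumed relation on the left by $\tilde{\mathbf{M}}_{j,0}^T$ then collapses it to $\mathbf{x}=\mathbf{0}$. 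What remains is $\mathbf{M}_{j,1}\mathbf{y}=\mathbf{0}$, and from the explicit structure (\ref{waveletova_mce}) each column of $\mathbf{M}_{j,1}$ has exactly two nonzero entries $-\tfrac{1}{2\sqrt2},\tfrac{1}{2\sqrt2}$ supported on a distinct pair of consecutive indices, so the columns have pairwise disjoint supports and are trivially independent; hence $\mathbf{y}=\mathbf{0}$. This establishes that $\mathbf{M}_j$ has full rank and is invertible.

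With invertibility in hand, the conclusion is immediate. Partitioning $\left(\mathbf{M}_j^T\right)^{-1}=\left[ \tilde{\mathbf{M}}_{j,0} \mid \tilde{\mathbf{M}}_{j,1} \right]$ by columns and reading off the block identity $\mathbf{M}_j^T \left(\mathbf{M}_j^T\right)^{-1}=\mathbf{I}_{j+1}$ reproduces all four biorthogonality relations simultaneously; the first block column recovers (\ref{algbior1}) (consistent with Lemma~\ref{explicit_Mj0}) and the second block column is exactly the $\tilde{\mathbf{M}}_{j,1}$ satisfying (\ref{algbior2}). Uniqueness can be stated separately for emphasis: if $\mathbf{E}$ is the difference of two solutions of (\ref{algbior2}), then $\mathbf{M}_{j,0}^T\mathbf{E}=\mathbf{0}_j$ and $\mathbf{M}_{j,1}^T\mathbf{E}=\mathbf{0}_j$, i.e. $\mathbf{M}_j^T\mathbf{E}=\mathbf{0}$, and invertibility of $\mathbf{M}_j^T$ forces $\mathbf{E}=\mathbf{0}$.

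The only real obstacle is the invertibility of $\mathbf{M}_j$, and I expect the clean way past it to be precisely the reduction above, which exploits the dual matrix $\tilde{\mathbf{M}}_{j,0}$ already produced in Lemma~\ref{explicit_Mj0} to eliminate the scaling-function block rather than attempting a direct determinant computation; the wavelet block $\mathbf{M}_{j,1}$ then contributes nothing beyond its obvious disjoint-support independence. No new computation with the explicit entries (\ref{mkl_explicit})--(\ref{def_djk_2}) is needed, so the argument stays short.
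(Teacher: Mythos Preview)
Your argument is correct and somewhat more conceptual than the paper's. The paper proceeds in the same spirit as its proof of Lemma~\ref{explicit_Mj0}: it uses the second relation $\mathbf{M}_{j,1}^T \tilde{\mathbf{M}}_{j,1}=\mathbf{I}_j$ together with the explicit form (\ref{waveletova_mce}) to express the odd-indexed rows of $\tilde{\mathbf{M}}_{j,1}$ in terms of the even-indexed ones, $\tilde{M}^{j,1}_{2k-1,l}=2\delta_{2k-1,2l-1}+\tilde{M}^{j,1}_{2k,l}$, and then substitutes this into the first relation $\mathbf{M}_{j,0}^T \tilde{\mathbf{M}}_{j,1}=\mathbf{0}_j$ to obtain a square system whose coefficient matrix is exactly the $\mathbf{A}_j$ already shown to be invertible in the proof of Lemma~\ref{explicit_Mj0}. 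You instead package both conditions into invertibility of the full refinement matrix $\mathbf{M}_j=[\mathbf{M}_{j,0}\mid\mathbf{M}_{j,1}]$ and establish that by combining the dual matrix $\tilde{\mathbf{M}}_{j,0}$ from Lemma~\ref{explicit_Mj0} with the disjoint-support structure of the columns of $\mathbf{M}_{j,1}$. Both routes ultimately lean on Lemma~\ref{explicit_Mj0}; the paper's reduction is more explicit and would hand you formulas for the entries of $\tilde{\mathbf{M}}_{j,1}$ if needed, whereas your block-level argument is shorter, avoids repeating the row-pairing computation, and makes the logical dependence on Lemma~\ref{explicit_Mj0} transparent.
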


\begin{proof}
For $l \in \mathcal{I}_{j+1}$ and $k \in \mathcal{I}_{j}$ the entries $\tilde{M}_{k,l}^{j,1}$ of the matrix $\tilde{\mathbf{M}}_{j,1}$ satisfy
\begin{equation*} 
\tilde{M}^{j,1}_{2k-1,l} = 2 \delta_{2k-1,2l-1} + \tilde{M}^{j,1}_{2k,l}.
\end{equation*}
Using these relations we obtain a system of equations with the matrix $\mathbf{A}_j$ defined in the proof of Lemma~\ref{norm_Mj0}.
Since the matrix $\mathbf{A}_j$ is invertible, the matrix $\tilde{\mathbf{M}}_{j,1}$ exists and is unique. 
$\qquad$ \end{proof}

\begin{lem} \label{decomposition_Phij}
We have $\Phi_{j+1}=\tilde{\mathbf{M}}_{j,0} \Phi_j + \tilde{\mathbf{M}}_{j,1} \Psi_j$ for all $j \geq 2$. 
\end{lem}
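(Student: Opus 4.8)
The plan is to package the two refinement relations of (\ref{ref_matrices}) together with the four algebraic biorthogonality relations of (\ref{algbior1}) and (\ref{algbior2}) into block-matrix identities and then simply invert a square matrix. First I would introduce the $2^{j+1} \times 2^{j+1}$ matrices $\mathbf{M}_j := \left[ \mathbf{M}_{j,0} \mid \mathbf{M}_{j,1} \right]$ and $\tilde{\mathbf{M}}_j := \left[ \tilde{\mathbf{M}}_{j,0} \mid \tilde{\mathbf{M}}_{j,1} \right]$ obtained by placing the blocks side by side. Each of the four blocks has size $2^{j+1} \times 2^j$, so both $\mathbf{M}_j$ and $\tilde{\mathbf{M}}_j$ are square of order $2^{j+1}$. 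Stacking the two relations in (\ref{ref_matrices}) then reads as the single identity $\mathbf{M}_j^T \Phi_{j+1} = \left( \Phi_j, \Psi_j \right)^T$, where the right-hand side denotes the column vector obtained by concatenating the entries of $\Phi_j$ and $\Psi_j$. Likewise, the four relations in (\ref{algbior1}) and (\ref{algbior2}) are precisely the four blocks of the product $\mathbf{M}_j^T \tilde{\mathbf{M}}_j$, so together they state $\mathbf{M}_j^T \tilde{\mathbf{M}}_j = \mathbf{I}_{j+1}$.

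The key observation is that $\mathbf{M}_j^T$ and $\tilde{\mathbf{M}}_j$ are square matrices of the same order, so a one-sided inverse is automatically a two-sided inverse: from $\mathbf{M}_j^T \tilde{\mathbf{M}}_j = \mathbf{I}_{j+1}$ it follows that $\mathbf{M}_j^T$ is invertible with $\left( \mathbf{M}_j^T \right)^{-1} = \tilde{\mathbf{M}}_j$, and hence $\tilde{\mathbf{M}}_j \mathbf{M}_j^T = \mathbf{I}_{j+1}$ as well. Multiplying the refinement identity $\mathbf{M}_j^T \Phi_{j+1} = \left( \Phi_j, \Psi_j \right)^T$ on the left by $\tilde{\mathbf{M}}_j$ then gives $\Phi_{j+1} = \tilde{\mathbf{M}}_j \left( \Phi_j, \Psi_j \right)^T$, and expanding the block product yields $\Phi_{j+1} = \tilde{\mathbf{M}}_{j,0} \Phi_j + \tilde{\mathbf{M}}_{j,1} \Psi_j$, which is the asserted decomposition.

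I expect the only genuinely delicate point to be this passage from a one-sided to a two-sided inverse, which rests entirely on the squareness of $\mathbf{M}_j$; verifying that all four blocks indeed have size $2^{j+1} \times 2^j$ (so that the concatenated matrices are $2^{j+1} \times 2^{j+1}$) is therefore the step I would state carefully. Everything else is formal block-matrix manipulation, and no new computation involving the explicit entries of $\tilde{\mathbf{M}}_{j,0}$ from (\ref{mkl_explicit})--(\ref{def_djk_2}) or of $\tilde{\mathbf{M}}_{j,1}$ is required: Lemma~\ref{explicit_Mj0} and the subsequent lemma already supply exactly the algebraic biorthogonality needed to assemble $\mathbf{M}_j^T \tilde{\mathbf{M}}_j = \mathbf{I}_{j+1}$.
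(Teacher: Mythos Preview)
Your proposal is correct and follows essentially the same approach as the paper: package the refinement relations into a block identity, multiply by the block matrix $\left[\tilde{\mathbf{M}}_{j,0},\tilde{\mathbf{M}}_{j,1}\right]$, and invoke (\ref{algbior1}) and (\ref{algbior2}). In fact you are more careful than the paper at the one non-formal point: the relations (\ref{algbior1}) and (\ref{algbior2}) give $\mathbf{M}_j^T\tilde{\mathbf{M}}_j=\mathbf{I}_{j+1}$, whereas what is actually needed after left-multiplication is $\tilde{\mathbf{M}}_j\mathbf{M}_j^T=\mathbf{I}_{j+1}$; you make explicit the squareness argument that bridges the two, which the paper leaves implicit.
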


\begin{proof}
Due to (\ref{ref_matrices}) we have
\begin{equation*}
\begin{bmatrix} \Phi_j \\ \Psi_j \end{bmatrix} = \begin{bmatrix} \mathbf{M}^T_{j,0} \vspace{1mm} \\ \mathbf{M}^T_{j,1} \end{bmatrix} \Phi_{j+1}, \quad j \geq 2.
\end{equation*}
Multiplying this equation by the matrix $\left[ \tilde{\mathbf{M}}_{j,0}, \tilde{\mathbf{M}}_{j,1} \right]$ from the left-hand side and using (\ref{algbior1}) and (\ref{algbior2}) the lemma is proved.
$\qquad$ \end{proof}

For any matrix $\mathbf{M}$ of the size $m \times n$ we set
\begin{equation*} 
\left\|\mathbf{M}\right\|_2 = \sup_{\mathbf{v} \in \mathbb{R}^n, \mathbf{v} \neq \mathbf{0} } \frac{ \left\| \mathbf{M} \mathbf{v} \right\|_2}{ \left\| \mathbf{v} \right\|_2}
\end{equation*}
and 
\begin{equation*} 
\left\| \mathbf{M} \right\|_1 = \max\limits_{l=1, \ldots, n} \sum\limits_{k=1}^m \left| M_{k,l} \right|, \quad \left\| \mathbf{M} \right\|_{\infty} = \max\limits_{k=1, \ldots, m} \sum\limits_{l=1}^n \left| M_{k,l} \right|.
\end{equation*}
It is well-known that 
\begin{equation} \label{estimate_2_norm}
\left\| \mathbf{M} \right\|_2 \leq \sqrt{ \left\| \mathbf{M} \right\|_1 \left\| \mathbf{M} \right\|_{\infty} }.
\end{equation}

\begin{lem} \label{norm_Mj0}
The matrices $\tilde{\mathbf{M}}_{j,0}$, $j \geq 2$, have uniformly bounded norms, i.e. there exists $C \in \mathbb{R}$ independent of $j$ such that
$\left\|\tilde{\mathbf{M}}_{j,0} \right\|_2 \leq C$ for all $j \geq 2$.
\end{lem}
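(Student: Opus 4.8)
The plan is to bound the spectral norm through the elementary inequality (\ref{estimate_2_norm}), so that it suffices to show that both the maximal absolute column sum $\left\|\tilde{\mathbf{M}}_{j,0}\right\|_1$ and the maximal absolute row sum $\left\|\tilde{\mathbf{M}}_{j,0}\right\|_\infty$ are bounded by a constant independent of $j$. The whole argument rests on the observation that $a=-3-2\sqrt{2}$ satisfies $\left|a\right|=3+2\sqrt{2}>1$, so with $q:=\left|a\right|^{-1}=3-2\sqrt{2}\in(0,1)$ every factor $a^{-\left|m-l\right|}$ appearing in (\ref{mkl_explicit}) and (\ref{mkl_explicit2}) decays geometrically. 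In particular, for any fixed anchor $m$ the geometric sum $\sum_{l}\left|a\right|^{-\left|m-l\right|}\le (1+q)/(1-q)$ is bounded uniformly in $m$ and in the number of columns.

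First I would establish a uniform bound on the correction coefficients, namely $\sup_{j}\sup_{k}\left|d^j_k\right|<\infty$ and, more importantly, $\sup_{j}\sum_{k=1}^{n}\left|d^j_k\right|<\infty$, where $n=2^j$. This follows from (\ref{def_djk}) and (\ref{def_djk_2}) by noting that, since $\left|a\right|>1$ and $n\ge 2$, the exponents $2-n$, $4-2n$, $2-k$ and $k+3-2n$ are all nonpositive for $k\le n-1$, so that $\left|a^{2-n}\right|,\left|a^{4-2n}\right|\le 1$; consequently the denominator defining $\alpha_n$ stays bounded away from zero and $\alpha_n$ is bounded uniformly in $n$. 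Hence $\left|d^j_1\right|$ and $\left|d^j_n\right|$ are bounded, while for interior $k$ one has $\left|d^j_k\right|\le C\left(q^{\,k-2}+q^{\,2n-3-k}\right)$; summing the two geometric tails over $k$ gives the desired uniform bound on $\sum_k\left|d^j_k\right|$.

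With these ingredients the two norms are immediate. For the row sum I would fix a row and use that, by (\ref{mkl_explicit}) and (\ref{mkl_explicit2}), its entries are dominated by $q^{\left|k-l\right|}+\left|d^j_k\right|q^{\left|1-l\right|}+\left|d^j_{n+1-k}\right|q^{\left|n-l\right|}$; summing over $l$ and using the uniform geometric bound together with $\left|d^j_k\right|,\left|d^j_{n+1-k}\right|\le D:=\sup_{j,k}\left|d^j_k\right|$ yields $\left\|\tilde{\mathbf{M}}_{j,0}\right\|_\infty\le\frac{1+q}{1-q}\left(1+2D\right)$. For the column sum I would fix a column $l$ and sum over all rows, exploiting that by (\ref{mkl_explicit}) and (\ref{mkl_explicit2}) each value is counted twice: the $q^{\left|k-l\right|}$-part sums to at most $2(1+q)/(1-q)$, while the two remaining parts factor as $2q^{\left|1-l\right|}\sum_k\left|d^j_k\right|$ and $2q^{\left|n-l\right|}\sum_k\left|d^j_{n+1-k}\right|$, both controlled by $2\sup_j\sum_k\left|d^j_k\right|$. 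Thus $\left\|\tilde{\mathbf{M}}_{j,0}\right\|_1$ is also bounded independently of $j$, and (\ref{estimate_2_norm}) finishes the proof.

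The boundary rows $k=1,2$ and $k=2^{j+1}-1,2^{j+1}$ carry only the two $d$-terms and are therefore already dominated by the same template, so I expect no extra work there. The only genuinely delicate point is the second paragraph: one must check carefully that the exponents in (\ref{def_djk}) and (\ref{def_djk_2}) are nonpositive over the whole range of $k$ and $n\ge 2$, so that the geometric factors stay $\le 1$ and, after summation in $k$, produce a constant that does not depend on $j$. The sign of $a$ is irrelevant here, since only absolute values enter the estimates.
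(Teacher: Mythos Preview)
Your argument is correct. You work directly with the explicit entry formulas~(\ref{mkl_explicit})--(\ref{def_djk_2}) and bound $\left\|\tilde{\mathbf{M}}_{j,0}\right\|_1$ and $\left\|\tilde{\mathbf{M}}_{j,0}\right\|_\infty$ by summing the three geometric templates $q^{\left|k-l\right|}$, $q^{\left|1-l\right|}$, $q^{\left|n-l\right|}$ and controlling the coefficients $d^j_k$ via the summable tails $q^{k-2}+q^{2n-3-k}$. One small slip: you write ``$n\ge 2$'' when checking the exponents, but in fact $n=2^j\ge 4$ for $j\ge 2$; this only helps, and all four exponents $2-n$, $4-2n$, $2-k$, $k+3-2n$ are indeed nonpositive on the relevant range, so $\alpha_n$ is uniformly bounded and your estimates go through.

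The paper organizes the same ingredients differently. Rather than summing the entries directly, it invokes the factorization $\tilde{\mathbf{M}}_{j,0}=\mathbf{G}_j\mathbf{D}_j^{-1}\mathbf{C}_j^{-1}\mathbf{H}_j^{-1}$ established in the proof of Lemma~\ref{explicit_Mj0} and bounds the spectral norm of each factor separately, again via~(\ref{estimate_2_norm}) and geometric series: $\left\|\mathbf{G}_j\right\|_2\le\sqrt{2}$, $\left\|\mathbf{C}_j^{-1}\right\|_2\le\sqrt{2}$, $\left\|\mathbf{D}_j^{-1}\right\|_2\le 1.4$, $\left\|\mathbf{H}_j^{-1}\right\|_2\le 1$, giving the explicit constant $2.8$. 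The factorized route is a bit cleaner because the geometric decay is isolated in $\mathbf{C}_j^{-1}$ (entries $a^{-|k-l|}$) and the boundary corrections in $\mathbf{D}_j^{-1}$ (two nontrivial columns with geometric tails), so one never has to disentangle the three overlapping templates in a single sum. Your approach, on the other hand, is self-contained once the explicit entries are known and does not rely on remembering the factorization; it also makes transparent why $\sum_k\left|d^j_k\right|$ is the quantity governing the column sums. Both routes ultimately reduce to the same geometric-series bookkeeping.
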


\begin{proof}
Let $\mathbf{B}_j$, $\mathbf{C}_j$, $\mathbf{D}_j$ and $\mathbf{H}_j$ are the same as in the proof of Lemma~\ref{norm_Mj0}.
From (\ref{mkl_explicit}) and (\ref{mkl_explicit2}) we have
$\tilde{\mathbf{M}}_{j,0} = \mathbf{G}_j \mathbf{B}_j$, where $\mathbf{G}_j$ is of the size $2^{j+1} \times 2^j$
with entries
\begin{equation*}
\left( \mathbf{G}_j \right)_{k,l} = \begin{cases} 
1, & k=2l-1 \ {\rm or} \ k=2l , \\
0, & {\rm otherwise}.
\end{cases}
\end{equation*}
Therefore $\tilde{\mathbf{M}}_{j,0} = \mathbf{G}_j \mathbf{B}_j= \mathbf{G}_j \mathbf{D}_j^{-1} \mathbf{C}_j^{-1} \mathbf{H}_j^{-1}$. Since the matrices $\mathbf{G}_j$, $\mathbf{C}_j^{-1}$, $\mathbf{D}_j^{-1}$ and $\mathbf{H}_j^{-1}$ are given by simple explicit formulas, the formula for the sum of a geometric series and (\ref{estimate_2_norm}) yield 
\begin{equation*}
\left\| \mathbf{G}_j \right\|_2 \leq \sqrt{2}, \quad \left\| \mathbf{C}_j^{-1} \right\|_2 \leq \sqrt{2}, \quad \left\| \mathbf{D}_j^{-1} \right\|_2 \leq 1.4 \quad {\rm and}
\quad \left\| \mathbf{H}_j^{-1} \right\|_2 \leq 1,
\end{equation*}
and thus $\left\| \tilde{\mathbf{M}}_{j,0} \right\|_2 \leq 2.8$ for all $j \geq 2$.
$\qquad$ \end{proof}

\begin{lem} \label{norm_tildeS_j}
Let $\mathbf{S}_j=\tilde{\mathbf{M}}^T_{j,0} \tilde{\mathbf{M}}^T_{j+1,0}$, $j \geq 3$,
and $\tilde{\mathbf{S}}_j$ be the matrix given by
\begin{equation*} 
\left( \tilde{\mathbf{S}}_j \right)_{k,l}=\left( \mathbf{S}_j \right)_{2k-1,l}+\left( \mathbf{S}_j \right)_{2k,l}, \quad
k \in \mathcal{I}_{j-1}, l \in \mathcal{I}_{j+2}.
\end{equation*}
Then there exists a constant $C$ independet of $j$ such that $\left\| \tilde{\mathbf{S}}_j \right\|_2 < C < 2 \sqrt{2}$.
\end{lem}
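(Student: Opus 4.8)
The plan is to reduce $\tilde{\mathbf{S}}_j$ to an explicit matrix whose entries decay geometrically off a narrow diagonal band, and then to bound its spectral norm by the elementary inequality (\ref{estimate_2_norm}), exactly as in the proof of Lemma~\ref{norm_Mj0}. First I would note that passing from $\mathbf{S}_j$ to $\tilde{\mathbf{S}}_j$ is nothing but left multiplication by the matrix $\mathbf{G}_{j-1}^T$ from the proof of Lemma~\ref{norm_Mj0}, since $\left( \mathbf{G}_{j-1}^T \mathbf{S}_j \right)_{k,l} = \left( \mathbf{S}_j \right)_{2k-1,l} + \left( \mathbf{S}_j \right)_{2k,l}$. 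Hence
\begin{equation*}
\tilde{\mathbf{S}}_j = \mathbf{G}_{j-1}^T \mathbf{S}_j = \mathbf{G}_{j-1}^T \tilde{\mathbf{M}}_{j,0}^T \tilde{\mathbf{M}}_{j+1,0}^T .
\end{equation*}
Using the explicit entries (\ref{mkl_explicit})--(\ref{def_djk_2}) of $\tilde{\mathbf{M}}_{j,0}$ together with the row-pairing identity (\ref{mkl}), I would write out a closed expression for the entries of $\tilde{\mathbf{S}}_j$. Away from the boundary each entry is a finite sum of products of two factors of the form $a^{-\left| \cdot \right|}$ with $a = -3-2\sqrt{2}$, so that $\left( \tilde{\mathbf{S}}_j \right)_{k,l}$ decays geometrically with ratio $\left| a \right|^{-1} = 3-2\sqrt{2}$ as $l$ leaves a narrow band around the diagonal.

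Next I would estimate the two norms occurring in (\ref{estimate_2_norm}) by summing the resulting geometric series, as was done for the factors in Lemma~\ref{norm_Mj0}. The essential quantitative point is an asymmetry between $\left\| \tilde{\mathbf{S}}_j \right\|_1$ and $\left\| \tilde{\mathbf{S}}_j \right\|_\infty$: since $\tilde{\mathbf{S}}_j$ carries the finest level $j+2$ three levels down to level $j-1$, it encodes an eightfold coarsening, so each column meets only a few nonzero diagonals whereas each row is long. Summing over a single residue class gives a column sum $\left\| \tilde{\mathbf{S}}_j \right\|_1$ close to $1$, while the full interior kernel gives a row sum $\left\| \tilde{\mathbf{S}}_j \right\|_\infty$ several times larger; their geometric mean $\sqrt{ \left\| \tilde{\mathbf{S}}_j \right\|_1 \left\| \tilde{\mathbf{S}}_j \right\|_\infty }$ is precisely what stays well below $2\sqrt{2}$. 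A direct summation of the interior contributions yields such a bound with a definite margin.

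The delicate step, which I expect to be the main obstacle, is the contribution of the boundary rows and columns. The entries of $\tilde{\mathbf{S}}_j$ indexed near $1$ or near $2^{j+2}$ involve the boundary coefficients $d^j_1,d^j_n$ and the corrections carried by $\mathbf{D}_j^{-1}$ and $\mathbf{H}_j^{-1}$, which perturb the clean interior formulas. These corrections are of size $O(1)$, but they converge as $n=2^j\to\infty$, so the finitely many modified row and column sums are bounded uniformly in $j$; this is what ultimately produces a constant $C$ independent of $j$. I would estimate these boundary sums explicitly and verify that, combined with the interior estimate, the geometric mean still remains strictly below $2\sqrt{2}$. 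Invoking (\ref{estimate_2_norm}) then gives $\left\| \tilde{\mathbf{S}}_j \right\|_2 < C < 2\sqrt{2}$ for all $j\geq 3$, as claimed.
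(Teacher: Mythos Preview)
Your plan is correct and matches the paper's proof: the paper organizes the same interior/boundary split by writing $\tilde{\mathbf{M}}_{j,0}^T=\mathbf{K}_j+\mathbf{L}_j$ with $(\mathbf{K}_j)_{k,2l-1}=(\mathbf{K}_j)_{k,2l}=a^{-|k-l|}$, expands $\mathbf{S}_j$ into the four products $\mathbf{K}_j\mathbf{K}_{j+1}$, $\mathbf{K}_j\mathbf{L}_{j+1}$, $\mathbf{L}_j\mathbf{K}_{j+1}$, $\mathbf{L}_j\mathbf{L}_{j+1}$, and then bounds the column and row sums of each piece exactly as you describe. The final numerical outcome confirms your asymmetry intuition: the paper obtains $\|\tilde{\mathbf{S}}_j\|_1\leq 1.1$ and $\|\tilde{\mathbf{S}}_j\|_\infty\leq 7$, so (\ref{estimate_2_norm}) gives $\|\tilde{\mathbf{S}}_j\|_2\leq\sqrt{1.1\cdot 7}<2\sqrt{2}$.
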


{\em Proof}.
Let $\mathbf{K}_j$ be a $2^j \times 2^{j+1}$ matrix with entries
\begin{equation} \label{definition_Kj}
\left( \mathbf{K}_j \right)_{k,2l-1}=\left( \mathbf{K}_j \right)_{k,2l}=a^{- \left| k - l  \right|}, \quad k, l \in \mathcal{I}_{j}, \quad a=-3-2 \sqrt{2},
\end{equation}
and let $\mathbf{L}_j=\tilde{\mathbf{M}}_{j,0}^{T}-\mathbf{K}_j$.
We know the explicit expression of the matrix $\mathbf{L}_j$, because the explicit expressions of both $\tilde{\mathbf{M}}_{j,0}$ and $\mathbf{K}_j$ are known. We have
\begin{equation*}
\mathbf{S}_j = \tilde{\mathbf{M}}_{j,0}^T \tilde{\mathbf{M}}_{j+1,0}^T = \mathbf{K}_j \mathbf{K}_{j+1} + \mathbf{K}_j \mathbf{L}_{j+1} +
\mathbf{L}_j \mathbf{K}_{j+1} + \mathbf{L}_j \mathbf{L}_{j+1}.
\end{equation*}
Let us denote
\begin{equation*}
\mathbf{N}_j=\mathbf{K}_j \mathbf{K}_{j+1}, \mathbf{O}_j=\mathbf{K}_j \mathbf{L}_{j+1}, 
\mathbf{P}_j=\mathbf{L}_j \mathbf{K}_{j+1}, \mathbf{Q}_j=\mathbf{L}_j \mathbf{L}_{j+1},
\end{equation*}
and $\tilde{\mathbf{N}}_j$, $\tilde{\mathbf{O}}_j$, $\tilde{\mathbf{P}}_j$, and $\tilde{\mathbf{Q}}_j$
be derived from $\mathbf{N}_j$, $\mathbf{O}_j$, $\mathbf{P}_j$ and $\mathbf{Q}_j$ by similar way as $\tilde{\mathbf{S}}_j$
from $\mathbf{S}_j$.
Then $\tilde{\mathbf{S}}_j = \tilde{\mathbf{N}}_j + \tilde{\mathbf{O}}_j + \tilde{\mathbf{P}}_j + \tilde{\mathbf{Q}}_j.$
From (\ref{definition_Kj}) we have 
for $k \in \mathcal{I}_{j}$, $l \in \mathcal{I}_{j+1}$
\begin{equation*}
\left( \mathbf{N}_j \right)_{k,2l-1}=\left(\mathbf{N}_j\right)_{k,2l} = \mathbf{u}_k^T \mathbf{v}_l,
\end{equation*}  
where
\begin{eqnarray*}
\mathbf{u}_k &=& \left[ \frac{1}{a^{k-1}}, \frac{1}{a^{k-1}}, \frac{1}{a^{k-2}}, \ldots, \frac{1}{a}, \frac{1}{a}, 1, 1,
\frac{1}{a}, \frac{1}{a}, \ldots ,\frac{1}{a^{n-k}}, \frac{1}{a^{n-k}} \right]^T, \\
\mathbf{v}_l &=& \left[ \frac{1}{a^{l-1}} , \frac{1}{a^{l-2}}, \ldots, \frac{1}{a}, 1, \frac{1}{a}, \ldots, \frac{1}{a^{2n-l}}
\right]^T,
\end{eqnarray*}
$n=2^j$. Due to the structure of the vector $\mathbf{u}_k$ we can write
\begin{equation*}
\left( \mathbf{N}_j \right)_{k,l} = \frac{a+1}{a} \, \tilde{\mathbf{u}}_k^T \tilde{\mathbf{v}}_l,  
\end{equation*}
where
\begin{eqnarray*}
\tilde{\mathbf{u}}_k &=& \left[ \frac{1}{a^{k-1}}, \frac{1}{a^{k-2}}, \ldots, \frac{1}{a}, 1, \frac{1}{a}, \ldots , \frac{1}{a^{n-k}} \right]^T, \\
\tilde{\mathbf{v}}_l &=& \begin{cases} \left[ \frac{1}{a^{l-2}} , \frac{1}{a^{l-4}}, \ldots, \frac{1}{a^2}, 1, \frac{1}{a}, \frac{1}{a^3} \ldots, \frac{1}{a^{2n-l-1}} \right]^T, & \quad l \, \, \textrm{even}, \\
\left[ \frac{1}{a^{l-2}} , \frac{1}{a^{l-4}}, \ldots, \frac{1}{a}, 1, \frac{1}{a^2}, \frac{1}{a^4} \ldots, \frac{1}{a^{2n-l-1}} \right]^T, & \quad l \, \,  \textrm{odd}.
\end{cases}
\end{eqnarray*}
For $k > \frac{l}{2}$, $l \in \mathcal{I}_{j+1}$, $l$ even, we have
\begin{eqnarray*}
\left(\mathbf{N}_j\right)_{k,2l} &=& \frac{a+1}{a} \left( \sum_{m=1}^{\frac{l}{2}} a^{3m-k-l} + \sum_{m=\frac{l}{2}+1}^{k} a^{l+1-k-m} +
\sum_{m=k +1}^{n} a^{l+k+1-3m}  \right) \\
\nonumber &=& \frac{a+1}{a} \left( a^{\frac{l}{2}-k} \frac{1-\left(\frac{1}{a^3}\right)^{\frac{l}{2}} }{1-\frac{1}{a^3}} +
a^{\frac{l}{2}-k} \frac{1-\left(\frac{1}{a}\right)^{k-\frac{l}{2} }}{1-\frac{1}{a}} + a^{l-2-2k} \frac{1-\left(\frac{1}{a^3}\right)^{n-k } }{1-\frac{1}{a^3}} \right).
\end{eqnarray*}
Similarly for $k > \frac{l-1}{2}$, $l \in \mathcal{I}_{j+1}$, $l$ odd, we obtain
\begin{eqnarray*}
\!  \! \left(\mathbf{N}_j\right)_{k,2l} \! &=& \! \frac{a \! + \! 1}{a} \! \left( \sum_{m=1}^{\frac{l-1}{2}} \!  a^{3m-k-l} + \sum_{m=\frac{l+1}{2}}^{k} \! \! \!  a^{l+1-k-m} +
\sum_{m=k+1}^{n} \! \! \! a^{l+k+1-3m}  \right) \\
\nonumber \! &=& \! \frac{a \! + \! 1}{a} \! \left( \! a^{\frac{l}{2}-k-\frac{3}{2}} \frac{1 \!  - \! \left(\frac{1}{a^3}\right)^{\frac{l-1}{2}} }{1-\frac{1}{a^3}} +
a^{\frac{l+1}{2}-k} \frac{1 \! - \! \left(\frac{1}{a^3}\right)^{k-\frac{l-1}{2}}}{1-\frac{1}{a}} + a^{l-2-2k} \frac{1 \! - \! \left(\frac{1}{a^3}\right)^{n-k}}{1-\frac{1}{a^3}} \right) \! . 
\end{eqnarray*}
If $k \leq \frac{l}{2}$, $l \in \mathcal{I}_{j+1}$, $l$ even, then we have
\begin{eqnarray*}
\!  \! \left(\mathbf{N}_j\right)_{k,2l} &=& \frac{a+1}{a} \left( \sum_{m=1}^{k} a^{3m-k-l} + \sum_{k+1}^{\frac{l}{2}} a^{m+k-l} +
\sum_{m=\frac{l}{2}+1}^{n} a^{l+k+1-3m}  \right) \\
\nonumber &=& \frac{a+1}{a} \left( a^{2k-l} \frac{1-\left(\frac{1}{a^3}\right)^{k}}{1-\frac{1}{a^3}} +
a^{k-\frac{l}{2}} \frac{1-\left(\frac{1}{a}\right)^{\frac{l}{2}-k}}{1-\frac{1}{a}} + a^{k-\frac{l}{2}-2} \frac{1-\left(\frac{1}{a^3}\right)^{n-\frac{l}{2}} }{1-\frac{1}{a^3}} 
\right).
\end{eqnarray*}
If $k \leq \frac{l-1}{2}$, $l \in \mathcal{I}_{j+1}$, $l$ odd, then we have
\begin{eqnarray*}
\!  \!  \left(\mathbf{N}_j\right)_{k,2l} \! \! &=& \!  \!  \frac{a \! + \! 1}{a} \left( \sum_{m=1}^{k} a^{3m-k-l} + \sum_{k+1}^{\frac{l-1}{2}} a^{m+k+2-l} +
\sum_{m=\frac{l+1}{2}+1}^{n} a^{l+k+1-3m}  \right) \\
\nonumber \!  \! &=& \!  \! \frac{a \! + \! 1}{a} \! \left( a^{ 2k  -  l } \frac{1-\left(\frac{1}{a^3}\right)^{k}}{1-\frac{1}{a^3}} +
a^{k-\frac{l}{2}-\frac{1}{2}} \frac{1-\left(\frac{1}{a}\right)^{\frac{l-1}{2}-k}}{1-\frac{1}{a}} + a^{k-\frac{l}{2}-\frac{1}{2}} \frac{1-\left(\frac{1}{a^3}\right)^{n-\frac{l-1}{2}} }{1-\frac{1}{a^3}} \right).
\end{eqnarray*}
To compute an upper bound for the norm of the matrix $\tilde{\mathbf{S}}_j $, we compute bounds for 
the sums of absolute values of entries in rows and columns for matrices $\tilde{\mathbf{N}}_j$, $\tilde{\mathbf{O}}_j$, $\tilde{\mathbf{P}}_j$, and $\tilde{\mathbf{Q}}_j$. Since the values in columns of the matrix $\tilde{\mathbf{N}}_j$
are exponentially decreasing, we can compute several largest values in each column and estimate the sum of absolute values of the remaining entries.
We denote
\begin{equation*}
\bar{\mathcal{I}}_{j+2} =  \left\{1,2,3,4, 2^{j+2}-3, 2^{j+2}-2, 2^{j+2}-1, 2^{j+2} \right\}, \quad
\nonumber \check{\mathcal{I}}_{j+2} = \mathcal{I}_{j+2} \backslash \bar{\mathcal{I}}_{j+2}
\end{equation*}
and we set
\begin{equation*}
\left( \tilde{\mathbf{N}}_j \right)_{k,l}=0, \quad {\rm for} \ k \notin \mathcal{I}_{j-1}.
\end{equation*}
For $l$ such that $l \! \! \mod 8 \in \left\{ 0, 1, 6, 7\right\}$ and $l \in \check{\mathcal{I}}_{j+2}$ we obtain
\begin{eqnarray*}
\!  \! \sum_{k=1}^{2^{j-1}} \left| \left( \tilde{\mathbf{N}}_j \right)_{k,l} \right| & \leq & 
\left| \left( \tilde{\mathbf{N}}_j \right)_{\left\lfloor \frac{l}{8} \right\rfloor - 1,l} \right| +
\left| \left( \tilde{\mathbf{N}}_j \right)_{\left\lfloor \frac{l}{8} \right\rfloor,l} \right| +
\left| \left( \tilde{\mathbf{N}}_j \right)_{\left\lfloor \frac{l}{8} \right\rfloor +1,l} \right| \\
\nonumber &+& \sum\limits_{k=1}^{\left\lfloor \frac{l}{8} \right\rfloor - 2} \left| \left( \tilde{\mathbf{N}}_j \right)_{k,l} \right| 
+ \sum\limits_{k=\left\lfloor \frac{l}{8} \right\rfloor + 2}^{2^{j-1}} \left| \left( \tilde{\mathbf{N}}_j \right)_{k,l} \right| \\
\nonumber & \leq & 0.018 + 0.727 + 0.239 + 0.007 + 0.001 \leq 1.
\end{eqnarray*}
For $l$ such that $l \! \! \mod 8 \in \left\{ 2, 3, 4, 5\right\}$ and $l \in \check{\mathcal{I}}_{j+2}$ we obtain
\begin{eqnarray*}
\sum_{k=1}^{2^{j-1}} \left| \left( \tilde{\mathbf{N}}_j \right)_{k,l} \right| & \leq & 
\left| \left( \tilde{\mathbf{N}}_j \right)_{\left\lfloor \frac{l}{8} \right\rfloor - 1,l} \right| +
\left| \left( \tilde{\mathbf{N}}_j \right)_{\left\lfloor \frac{l}{8} \right\rfloor,l} \right| +
\left| \left( \tilde{\mathbf{N}}_j \right)_{\left\lfloor \frac{l}{8} \right\rfloor +1,l} \right| \\
\nonumber &+& \sum\limits_{k=1}^{\left\lfloor \frac{l}{8} \right\rfloor - 2} \left| \left( \tilde{\mathbf{N}}_j \right)_{k,l} \right| 
+ \sum\limits_{k=\left\lfloor \frac{l}{8} \right\rfloor + 2}^{2^{j-1}} \left| \left( \tilde{\mathbf{N}}_j \right)_{k,l} \right| \\
\nonumber & \leq & 0.101 + 0.566 + 0.037 + 0.002 + 0.004 \leq 1.
\end{eqnarray*}
For $l \in \bar{\mathcal{I}}_{j+2}$ we have
\begin{equation*}
\sum_{k=1}^{2^{j-1}} \left| \left( \tilde{\mathbf{N}}_j \right)_{k,l} \right| \leq 0.5.
\end{equation*}

We use the similar approach for computing the sums of absolute values of the entries in rows.
We obtain 
\begin{equation*}
\sum_{k=1}^{2^{j-1}} \left| \left( \tilde{\mathbf{N}}_j \right)_{k,l} \right| \leq
\begin{cases}
0.73, & l \in \bar{\mathcal{I}}_{j+2}, \\
1.00, & l \in \check{\mathcal{I}}_{j+2},
\end{cases} \quad
\sum_{l=1}^{2^{j+2}} \left| \left( \tilde{\mathbf{N}}_j \right)_{k,l} \right| \leq 
\begin{cases}
5.95, & k=1, 2^{j-1}, \\
6.80, & {\rm otherwise}.
\end{cases}
\end{equation*}
Similarly, we obtain
\begin{equation*}
\sum_{k=1}^{2^{j-1}} \left| \left( \tilde{\mathbf{O}}_j  \right)_{k,l} \right| \leq
\begin{cases}
0.13, & l \in \bar{\mathcal{I}}_{j+2}, \\
0.04, & l \in \check{\mathcal{I}}_{j+2},
\end{cases} \quad
\sum_{l=1}^{2^{j+2}} \left| \left( \tilde{\mathbf{O}}_j  \right)_{k,l} \right| \leq 
\begin{cases}
0.30, & k=1, 2^{j-1}, \\
0.02, & {\rm otherwise},
\end{cases}
\end{equation*}
\begin{equation*}
\sum_{k=1}^{2^{j-1}} \left| \left( \tilde{\mathbf{P}}_j  \right)_{k,l} \right| \leq
\begin{cases}
0.15, & l \in \bar{\mathcal{I}}_{j+2}, \\
0.05, & l \in \check{\mathcal{I}}_{j+2},
\end{cases} \quad
\sum_{l=1}^{2^{j+2}} \left| \left( \tilde{\mathbf{P}}_j \right)_{k,l} \right| \leq 
\begin{cases}
0.68, & k=1, 2^{j-1}, \\
0.04, & {\rm otherwise},
\end{cases}
\end{equation*}
\begin{equation*}
\sum_{k=1}^{2^{j-1}} \left| \left( \tilde{\mathbf{Q}}_j  \right)_{k,l} \right| \leq
\begin{cases}
0.03, & l \in \bar{\mathcal{I}}_{j+2}, \\
0.01, & l \in \check{\mathcal{I}}_{j+2},
\end{cases} \quad
\sum_{l=1}^{2^{j+2}} \left| \left( \tilde{\mathbf{Q}}_j \right)_{k,l} \right| \leq 
\begin{cases}
0.06, & k=1, 2^{j-1}, \\
0.01, & {\rm otherwise}.
\end{cases}
\end{equation*}
Therefore using (\ref{estimate_2_norm}) we have
\begin{equation*}
\left\| \tilde{\mathbf{S}}_j \right\|_2 \leq \sqrt{1.1 \cdot 7} < 2 \sqrt{2}.
\qquad\endproof
\end{equation*}

\begin{lem} \label{norm_nasobeni_MjMj1} 
Let $m,n \geq 2$, $m < n$, then there exists a constant $C<2$ such that
\begin{equation*}
\left\| \tilde{\mathbf{M}}^T_{m,0} \tilde{\mathbf{M}}^T_{m+1,0} \ldots \tilde{\mathbf{M}}^T_{n,0} \tilde{\mathbf{M}}^T_{n+1,0}\right\|_2
\leq C \, \left\| \tilde{\mathbf{M}}^T_{m,0} \tilde{\mathbf{M}}^T_{m+1,0} \ldots \tilde{\mathbf{M}}^T_{n-1,0} \right\|_2.
\end{equation*}
\end{lem}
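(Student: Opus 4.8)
The plan is to reduce the statement to Lemma~\ref{norm_tildeS_j} by exploiting the ``column-doubling'' structure of the matrices $\tilde{\mathbf{M}}^T_{j,0}$. Recall from the proof of Lemma~\ref{norm_Mj0} that $\tilde{\mathbf{M}}_{j,0}=\mathbf{G}_j\mathbf{B}_j$, so that $\tilde{\mathbf{M}}^T_{j,0}=\mathbf{B}_j^T\mathbf{G}_j^T$, where the transpose $\mathbf{G}_j^T$ is the $2^j\times 2^{j+1}$ matrix that sums consecutive pairs of entries, i.e. $(\mathbf{G}_j^T\mathbf{v})_k=v_{2k-1}+v_{2k}$. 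Two facts about $\mathbf{G}_{n-1}^T$ will drive the argument: first, summing consecutive rows of $\mathbf{S}_n$ is exactly left multiplication by $\mathbf{G}_{n-1}^T$, so that $\mathbf{G}_{n-1}^T\mathbf{S}_n=\tilde{\mathbf{S}}_n$ in the notation of Lemma~\ref{norm_tildeS_j}; second, $\mathbf{G}_{n-1}^T\mathbf{G}_{n-1}=2\,\mathbf{I}_{n-1}$, so that $\tfrac{1}{\sqrt 2}\mathbf{G}_{n-1}^T$ is a coisometry.

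I would write $P=\tilde{\mathbf{M}}^T_{m,0}\tilde{\mathbf{M}}^T_{m+1,0}\cdots\tilde{\mathbf{M}}^T_{n-1,0}$ for the right-hand side product and $\mathbf{S}_n=\tilde{\mathbf{M}}^T_{n,0}\tilde{\mathbf{M}}^T_{n+1,0}$, so that the left-hand side product equals $P\mathbf{S}_n$. I would then peel the factor $\mathbf{G}_{n-1}^T$ off the last factor of $P$ via $\tilde{\mathbf{M}}^T_{n-1,0}=\mathbf{B}_{n-1}^T\mathbf{G}_{n-1}^T$, obtaining $P=R\,\mathbf{G}_{n-1}^T$ with $R:=\tilde{\mathbf{M}}^T_{m,0}\cdots\tilde{\mathbf{M}}^T_{n-2,0}\mathbf{B}_{n-1}^T$ (the empty product being read as $\mathbf{B}_m^T$ when $n=m+1$). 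Using $\mathbf{G}_{n-1}^T\mathbf{S}_n=\tilde{\mathbf{S}}_n$ this gives
\begin{equation*}
P\mathbf{S}_n=R\,\mathbf{G}_{n-1}^T\mathbf{S}_n=R\,\tilde{\mathbf{S}}_n,
\end{equation*}
which exchanges the potentially large factor $\mathbf{S}_n$ for the controlled factor $\tilde{\mathbf{S}}_n$.

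To finish I would relate $\|R\|_2$ to $\|P\|_2$. Since $U:=\tfrac{1}{\sqrt 2}\mathbf{G}_{n-1}^T$ satisfies $UU^T=\mathbf{I}$, the identity $\|\mathbf{M}\|_2^2=\|\mathbf{M}\mathbf{M}^T\|_2$ yields $\|RU\|_2^2=\|R\,UU^T R^T\|_2=\|RR^T\|_2=\|R\|_2^2$, whence $\|P\|_2=\|R\mathbf{G}_{n-1}^T\|_2=\sqrt 2\,\|R\|_2$. Combining this with submultiplicativity and Lemma~\ref{norm_tildeS_j} (applicable because $n>m\geq 2$ forces $n\geq 3$), which supplies a constant $C'<2\sqrt 2$ with $\|\tilde{\mathbf{S}}_n\|_2\leq C'$, gives
\begin{equation*}
\|P\mathbf{S}_n\|_2=\|R\tilde{\mathbf{S}}_n\|_2\leq\|R\|_2\,\|\tilde{\mathbf{S}}_n\|_2<\tfrac{1}{\sqrt 2}\|P\|_2\cdot C'=\tfrac{C'}{\sqrt 2}\,\|P\|_2,
\end{equation*}
so the claim holds with $C=C'/\sqrt 2<2$. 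The one genuinely delicate point is the exact equality $\|R\mathbf{G}_{n-1}^T\|_2=\sqrt 2\,\|R\|_2$: the naive bound $\|P\mathbf{S}_n\|_2\leq\|P\|_2\|\mathbf{S}_n\|_2$ is useless since $\|\mathbf{S}_n\|_2$ need not be small, and it is precisely the coisometry property of $\mathbf{G}_{n-1}^T$ that lets the factor $\sqrt 2$ gained by dropping it combine with the sub-$2\sqrt 2$ bound on $\tilde{\mathbf{S}}_n$ to stay below the threshold $2$.
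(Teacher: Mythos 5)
Your proposal is correct and follows essentially the same route as the paper: both exchange $\mathbf{S}_n=\tilde{\mathbf{M}}^T_{n,0}\tilde{\mathbf{M}}^T_{n+1,0}$ for the row-summed matrix $\tilde{\mathbf{S}}_n$ of Lemma~\ref{norm_tildeS_j} by exploiting the duplicated-column structure of the preceding product (your $R$ is exactly the paper's matrix $\tilde{\mathbf{R}}$ of even columns), and then recover the needed factor $\sqrt{2}$ before applying the bound $\|\tilde{\mathbf{S}}_n\|_2<2\sqrt{2}$. The only cosmetic difference is that you obtain $\|P\|_2=\sqrt{2}\,\|R\|_2$ exactly via the coisometry $\tfrac{1}{\sqrt{2}}\mathbf{G}_{n-1}^T$, whereas the paper proves the (sufficient) inequality $\|\tilde{\mathbf{R}}\|_2\leq\|\mathbf{R}\|_2/\sqrt{2}$ by a supremum argument over vectors with duplicated entries.
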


\begin{proof}
For $m$ and $n$ fixed such that $m,n \geq 2$, $m  < n$, we use notation:
\begin{equation*}
\mathbf{R}=\tilde{\mathbf{M}}^T_{m,0} \tilde{\mathbf{M}}^T_{m+1,0} \ldots \tilde{\mathbf{M}}^T_{n-1,0}, \quad
\mathbf{S}=\tilde{\mathbf{M}}^T_{n,0} \tilde{\mathbf{M}}^T_{n+1,0}.
\end{equation*}
Due to the structure of the matrices $\tilde{\mathbf{M}}_{j,0}$ given in Lemma~\ref{norm_Mj0}
we have 
\begin{equation*}
\mathbf{R}_{k,2l}=\mathbf{R}_{k,2l-1}, \quad k \in \mathcal{I}_m, l \in \mathcal{I}_{n-1}. 
\end{equation*}
Therefore, we can write 
$\mathbf{R} \mathbf{S} = \tilde{\mathbf{R}} \tilde{\mathbf{S}}$,
where the matrix $\tilde{\mathbf{R}}$ is $2^m \times 2^{n-1}$ matrix containing even columns of the matrix $\mathbf{R}$, i.e.
$\tilde{\mathbf{R}}_{k,l}=\mathbf{R}_{k,2l}$, and the matrix $\tilde{\mathbf{S}} $ is given by
\begin{equation*} 
\tilde{\mathbf{S}}_{k,l}= \mathbf{S}_{2k-1,l}+ \mathbf{S}_{2k,l}, \quad k \in \mathcal{I}_{n-1}, l \in \mathcal{I}_{n+2}.
\end{equation*}
We have
\begin{eqnarray*}
\left\| \tilde{\mathbf{R}} \right\|_2 &=& \sup\limits_{\mathbf{x} \in \mathbb{R}, \mathbf{x} \neq 0  } \frac{\left\| \tilde{\mathbf{R}} \mathbf{x} \right\|_2}{ \left\| \mathbf{x} \right\|_2} = \sup\limits_{\mathbf{x} \in \mathbb{R}, \mathbf{x} \neq 0} \frac{ \left( \sum\limits_{k \in \mathcal{I}_m} \left( \sum\limits_{l \in \mathcal{I}_{n-1} } \tilde{\mathbf{R}}_{k,l} \mathbf{x}_l\right)^2 \right)^{1/2}}{\left\| \mathbf{x} \right\|_2}.
\end{eqnarray*}
Let $\tilde{\mathbf{x}}$ be a vector of the length $q=2^n$ such that $\tilde{x}_{2j-1}=\tilde{x}_{2j}=x_j$ and let 
\begin{equation*}
\tilde{X}=\left\{ \tilde{\mathbf{x}} \in \mathbb{R}^q: \tilde{x}_{2j-1}=\tilde{x}_{2j}, \tilde{\mathbf{x}} \neq \mathbf{0} \right\}.
\end{equation*}
Then
$\left\| \tilde{\mathbf{x}} \right\|=\sqrt{2} \left\| \mathbf{x} \right\|$ and we have
\begin{eqnarray*}
\left\| \tilde{\mathbf{R}} \right\|_2 &=& \sup\limits_{\tilde{\mathbf{x}} \in \tilde{X}} \frac{ \left(\sum\limits_{k \in \mathcal{I}_m} \left(\sum\limits_{l \in \mathcal{I}_n} 2^{-1} \mathbf{R}_{k,l} \tilde{\mathbf{x}}_l\right)^2 \right)^{1/2}}{2^{-1/2} \left\| \tilde{\mathbf{x}} \right\|_2}\\
\nonumber &\leq& \sup\limits_{\tilde{\mathbf{x}} \in \mathbb{R}^q, \tilde{\mathbf{x}} \neq 0 } \frac{ 2^{-1} \left(\sum\limits_{k \in \mathcal{I}_m} \left(\sum\limits_{l \in \mathcal{I}_n} \mathbf{R}_{k,l} \tilde{\mathbf{x}}_l\right)^2 \right)^{1/2}}{2^{-1/2} \left\| \tilde{\mathbf{x}} \right\|_2} 
= \frac{\left\| \mathbf{R} \right\|_2}{\sqrt{2}}.
\end{eqnarray*}
Using Lemma~\ref{norm_tildeS_j} we obtain
\begin{equation*}
\left\| \mathbf{R} \mathbf{S} \right\|_2 = \left\| \tilde{\mathbf{R}} \tilde{\mathbf{S}} \right\|_2 \leq \left\| \tilde{\mathbf{R}} \right\|_2 \left\| \tilde{\mathbf{S}} \right\|_2 \leq C \left\| \mathbf{R} \right\|_2
\end{equation*}
with $C < 2$.
$\qquad$ \end{proof}

%

\begin{lem} \label{norm_nasobeni_Nj} 
There exist constants $C \in \mathbb{R}$ and $p < 0.5$ such that for all $m,n \geq 2$, $m<n$, we have
\begin{equation*} 
\left\| \tilde{\mathbf{M}}^T_{m,0} \tilde{\mathbf{M}}^T_{m+1,0} \ldots \tilde{\mathbf{M}}^T_{n-1,0} \right\|_2 \leq C \, 2^{p \left( n-m \right) }.
\end{equation*}
\end{lem}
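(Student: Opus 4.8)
The plan is to exploit Lemma~\ref{norm_nasobeni_MjMj1} as a two-step contraction on the length of the product and to iterate it down to a short base case that is controlled by Lemma~\ref{norm_Mj0}. For $j \geq m$ I would introduce the partial product
\[
\mathbf{T}_j := \tilde{\mathbf{M}}^T_{m,0} \tilde{\mathbf{M}}^T_{m+1,0} \cdots \tilde{\mathbf{M}}^T_{j,0},
\]
which consists of $j-m+1$ factors, so that the quantity to be bounded is exactly $\left\| \mathbf{T}_{n-1} \right\|_2$. Applying Lemma~\ref{norm_nasobeni_MjMj1} with $n$ replaced by $j-1$ yields, for every $j \geq m+2$, the recursion $\left\| \mathbf{T}_j \right\|_2 \leq C\,\left\| \mathbf{T}_{j-2} \right\|_2$ with one and the same constant $C<2$; here I replace $C$ by $\max(C,1)$ if necessary so that $1 \leq C < 2$, which only weakens the inequality.

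First I would iterate this recursion starting from $j=n-1$ and decreasing the index by two at each step. Because the step size is two, the parity of $n-1-m$ selects the base case: if $n-1-m=2s$ the iteration terminates at $\mathbf{T}_m$ after $s$ steps, whereas if $n-1-m=2s+1$ it terminates at $\mathbf{T}_{m+1}$ after $s$ steps. In either case one checks $s \leq (n-m)/2$, so, using $C \geq 1$ in the final step,
\[
\left\| \mathbf{T}_{n-1} \right\|_2 \leq C^{\,s}\,\max\bigl(\left\| \mathbf{T}_m \right\|_2,\left\| \mathbf{T}_{m+1} \right\|_2\bigr) \leq C^{(n-m)/2}\,\max\bigl(\left\| \mathbf{T}_m \right\|_2,\left\| \mathbf{T}_{m+1} \right\|_2\bigr).
\]

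Next I would bound the two admissible base cases by Lemma~\ref{norm_Mj0}. Since $\left\| \mathbf{M}^T \right\|_2=\left\| \mathbf{M} \right\|_2$ and $\left\| \tilde{\mathbf{M}}_{j,0} \right\|_2 \leq 2.8$ uniformly in $j$, submultiplicativity of the spectral norm gives $\left\| \mathbf{T}_m \right\|_2 \leq 2.8$ and $\left\| \mathbf{T}_{m+1} \right\|_2 \leq 2.8^2$, so the maximum above is at most the absolute constant $C_0:=2.8^2$. Finally, setting $p:=\tfrac12\log_2 C$ converts the geometric factor into the required form: because $1 \leq C<2$ we have $0 \leq p < \tfrac12$ and $C^{(n-m)/2}=2^{p(n-m)}$, whence $\left\| \mathbf{T}_{n-1} \right\|_2 \leq C_0\,2^{p(n-m)}$, which is the claim with constant $C_0$ and exponent $p<0.5$.

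The only genuine obstacle here is bookkeeping rather than mathematics: one must keep the parity of the number of factors straight so that the iteration lands on a base case of one or two factors, and one must confirm that both possible base cases are dominated by a single $j$-independent constant. The strict inequality $p<0.5$ is inherited directly from the strict bound $C<2$ of Lemma~\ref{norm_nasobeni_MjMj1}, which itself rests on the estimate $\left\| \tilde{\mathbf{S}}_j \right\|_2 < 2\sqrt{2}$ of Lemma~\ref{norm_tildeS_j}.
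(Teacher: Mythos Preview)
Your argument is correct and is precisely the intended derivation: the paper's own proof is just the one sentence ``The assertion of the lemma is a direct consequence of Lemma~\ref{norm_Mj0} and Lemma~\ref{norm_nasobeni_MjMj1},'' and what you have written is the natural unpacking of that sentence, iterating the two-step estimate of Lemma~\ref{norm_nasobeni_MjMj1} down to a base case of one or two factors controlled by Lemma~\ref{norm_Mj0}. The parity bookkeeping and the replacement of $C$ by $\max(C,1)$ are handled cleanly, and the condition $j\geq m+2$ needed to invoke Lemma~\ref{norm_nasobeni_MjMj1} (with $n$ replaced by $j-1$) is satisfied at every step of the descent.
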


\begin{proof}
The assertion of the lemma is a direct consequence of Lemma~\ref{norm_Mj0} and Lemma~\ref{norm_nasobeni_MjMj1}.
$\qquad$ \end{proof}

\section{Multivariate bases}

A basis on $\Omega_d=\left(0,1\right)^d$ is built from the univariate wavelet basis by a tensor product \cite{Urban2009}. Let $j \geq 2$, $\mathbf{k}=\left(k_1, \ldots, k_d \right)$, $\mathbf{k} \in \mathcal{I}_j^{d}:=\mathcal{I}_j \times \ldots \times \mathcal{I}_j$, and $\mathbf{x}=\left( x_1, \ldots, x_d \right) \in \Omega_d$.
We define the multivariate scaling functions by
\begin{equation*}
\phi_{j,\mathbf{k}}^{d} \left( \mathbf{x} \right)=\prod\limits_{l=1}^{d} \phi_{j,k_l} \left( x_l \right),
\end{equation*}
and for any $\mathbf{e}=\left(e_1,\ldots,e_d\right) \in E^d:=\left\{0,1\right\}^d \backslash \left( 0, \ldots, 0 \right)$, we define
the multivariate wavelet
\begin{equation*}
\psi_{j, \mathbf{e}, \mathbf{k}}^{d} \left(\mathbf{x}\right)=\prod\limits_{l=1}^d \psi_{j,e_l,k_l} \left(x_l\right),
\end{equation*}
where
\begin{equation*}
\psi_{j,e_l,k_l}=
\left\{ \begin{array}{ll}
\phi_{j,k_l}, & e_l=1, \\
\psi_{j,k_l}, & e_l=0. \\
\end{array} \right.
\end{equation*}
The basis on the unit cube $\Omega_d$ is then given by 
\begin{equation*}
\Psi^{dD}=\left\{\phi_{2, \mathbf{k}}^d,\ \mathbf{k} \in \mathcal{I}_j^{d} \right\} \cup \left\{\psi_{j, \mathbf{e}, \mathbf{k}}^{d}, \mathbf{e} \in E^d, \mathbf{k} \in \mathcal{I}_j^{d}, j \geq 2 \right\}.
\end{equation*}
By this approach, the regularity and polynomial exactness is preserved. 

\section{Riesz basis on Sobolev space} \label{sectionRB}
In this section, we prove that $\Psi$ is a Riesz basis of $H^1_0 \left( \Omega_1 \right)$ and $\Psi^{2D}$ is a Riesz basis of $H^1_0 \left( \Omega_2 \right)$. The proof is based on the lemmas from Section~\ref{section_refinement_matrices} and theory developed in \cite{Jia2011} that is summarized in the following theorem.
\begin{thm} \label{theorem_jia}
Let $H$ be a Hilbert space and let $V_j$, $j \geq J$,  be subspaces of $L_2 \left( \Omega \right)$ such that $V_j \subset V_{j+1}$ and $\cup_{j=J}^{\infty} V_j$ is dense in $H$. 
Let $H_q$ for fixed $q >0$ be a linear subspace of $H$ that is itself a normed linear space and assume that there exist positive constants $A_1$ and $A_2$ such that

a) If $f \in H_q$ has decomposition $f = \sum_{j \geq J} f_j$, $f_j \in V_j$ then
\begin{equation} \label{condition_Hq_1}
\left\| f \right\|_{H_q}^2 \leq A_1 \sum_{j \geq J} 2^{qj} \left\|f_j\right\|_H^2.
\end{equation}

b) For each $f \in H_q$ there exists a decomposition $f = \sum_{j \geq J} f_j$, $f_j \in V_j$, such that
\begin{equation} \label{condition_Hq_2}
\sum_{j \geq J} 2^{qj} \left\|f_j\right\|_H^2 \leq A_2 \left\| f \right\|_{H_q}^2.
\end{equation}
Furthermore, suppose that $P_j$ is a linear projection from $V_{j+1}$ onto $V_j$, 
$W_j$ is the kernel space of $P_j$, $\Phi_j=\left\{\phi_{j,k}, k\in \mathcal{I}_j \right\}$ are Riesz bases of $V_j$ with respect to the $L_2$--norm with uniformly bounded condition numbers and 
$\Psi_j=\left\{\psi_{j,k}, k\in \mathcal{I}_j \right\}$ are Riesz bases of $W_j$ with uniformly bounded condition numbers.
If there exist constants $C$ and $p$ such that $0<p<q$ and
\begin{equation} \label{norm_Pmn1}
\left\| P_m P_{m+1} \ldots P_{n-1} \right\| \leq C \, 2^{p \, \left( n-m \right)},
\end{equation}
then 
\begin{equation} \label{scaling_2naj}
\left\{2^{-Jq} \phi_{J,k}, k \in \mathcal{I}_J \right\} \cup \left\{2^{-jq} \psi_{j,k}, j \geq J, k \in \mathcal{I}_j \right\}
\end{equation}
is a Riesz basis of $H_q$.
\end{thm}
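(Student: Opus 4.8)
The plan is to realize the candidate family (\ref{scaling_2naj}) as the image of a stable multiscale decomposition and to reduce the Riesz basis property to a single weighted norm equivalence with constants independent of the level. First I would introduce the hierarchical projections $\Pi^{(n)}_j := P_j P_{j+1}\cdots P_n$, which map $V_{n+1}$ onto $V_j$ for $J\le j\le n$ (with $\Pi^{(n)}_{n+1}=I$), together with the detail operators $Q_j := I-P_j : V_{j+1}\to W_j$. Since $\Pi^{(n)}_{j+1}-\Pi^{(n)}_j=(I-P_j)\Pi^{(n)}_{j+1}=Q_j\Pi^{(n)}_{j+1}$, telescoping gives, for every $f\in V_{n+1}$,
\begin{equation*}
f = \Pi^{(n)}_J f + \sum_{j=J}^{n} Q_j \Pi^{(n)}_{j+1} f, \qquad \Pi^{(n)}_J f\in V_J,\quad Q_j\Pi^{(n)}_{j+1}f\in W_j .
\end{equation*}
Because $\Phi_J$ is a Riesz basis of $V_J$ and each $\Psi_j$ is a Riesz basis of $W_j$, all with uniformly bounded condition numbers, expanding these components in the respective bases and rescaling by $2^{-jq}$ shows that the squared $\ell^2$-norm of the coefficient sequence of $f$ in (\ref{scaling_2naj}) is comparable, uniformly in $n$, to the weighted multiscale energy
\begin{equation*}
S(f):=2^{2Jq}\bigl\|\Pi^{(n)}_J f\bigr\|_H^2 + \sum_{j=J}^{n} 2^{2jq}\bigl\|Q_j\Pi^{(n)}_{j+1} f\bigr\|_H^2 ,
\end{equation*}
the weight matching the rescaling $2^{-jq}$. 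It therefore suffices to prove the two-sided estimate $S(f)\sim\|f\|_{H_q}^2$ uniformly in $n$ and then to pass to the limit.

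The lower bound $\|f\|_{H_q}^2\le A_1\,S(f)$ is the easy direction: the telescoping identity is itself an admissible decomposition of $f$ into the spaces $V_j$ (each $Q_j\Pi^{(n)}_{j+1}f\in W_j\subset V_{j+1}$ and $\Pi^{(n)}_J f\in V_J$), so condition (\ref{condition_Hq_1}) applied to this particular decomposition yields the claim up to a harmless index shift and a fixed constant.

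For the reverse inequality, which is the crux, I would fix by (\ref{condition_Hq_2}) a near-optimal decomposition $f=\sum_i f_i$ with $f_i\in V_i$ and $\sum_i 2^{2iq}\|f_i\|_H^2\le A_2\|f\|_{H_q}^2$, and compare it termwise with the canonical components. Writing $Q_j\Pi^{(n)}_{j+1}=\Pi^{(n)}_{j+1}-\Pi^{(n)}_j$ and using that $\Pi^{(n)}_{j+1}$ and $\Pi^{(n)}_j$ both act as the identity on $V_i$ for $i\le j$, the coarse levels drop out and only $i>j$ survive; for those, the projection-product bound (\ref{norm_Pmn1}) applied to $\Pi_j|_{V_i}=P_j\cdots P_{i-1}$ gives $\bigl\|Q_j\Pi^{(n)}_{j+1}f_i\bigr\|_H\le C\,2^{p(i-j)}\|f_i\|_H$. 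Hence $2^{jq}\bigl\|Q_j\Pi^{(n)}_{j+1}f\bigr\|_H$ is dominated by the discrete convolution of the sequence $\bigl(2^{iq}\|f_i\|_H\bigr)_i$ against the one-sided kernel $2^{(p-q)(i-j)}\mathbf{1}_{i>j}$. As $p<q$ this kernel is summable, so Young's (Schur's) inequality bounds the $\ell^2$-norm of the left-hand quantities by that of the right-hand ones, giving $S(f)\le C\sum_i 2^{2iq}\|f_i\|_H^2\le C A_2\|f\|_{H_q}^2$. Transferring the estimate from an \emph{arbitrary} admissible decomposition to the projection-defined one across all scales is the main obstacle, and the hypothesis $p<q$ is exactly what makes the cross-scale coupling decay geometrically and the series converge.

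Finally I would pass from finite levels to all of $H_q$. By (\ref{condition_Hq_2}) the partial sums $\sum_{i\le n}f_i\in V_{n+1}$ converge to $f$ in $H_q$, so $\bigcup_j V_j$ is dense in $H_q$ and the uniform-in-$n$ equivalence $S(f)\sim\|f\|_{H_q}^2$ extends by density to all $f\in H_q$; consequently the analysis and synthesis maps between $H_q$ and $\ell^2$ are bounded with bounded inverses. Combined with the coefficient-energy comparison from the first step, this furnishes constants $c,C$ as in (\ref{riesz}) for the family (\ref{scaling_2naj}), which proves it is a Riesz basis of $H_q$.
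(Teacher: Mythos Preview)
The paper does not prove Theorem~\ref{theorem_jia}; it is quoted from \cite{Jia2011} and then applied as a black box in the proofs of Theorems~\ref{theorem_riesz_1D} and the 2D analogue. Your outline is essentially the argument of Jia and Zhao: telescoping via the cascaded projections, reduction of the Riesz property to a weighted $\ell^2$ norm equivalence, the direct inequality (\ref{condition_Hq_1}) for one direction, and a Schur--Young convolution estimate exploiting $p<q$ for the other.

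One technical point deserves tightening. When you apply (\ref{condition_Hq_2}) to $f\in V_{n+1}$, the resulting decomposition $f=\sum_i f_i$ may have $f_i\ne 0$ for $i>n+1$, and $\Pi^{(n)}_{j+1}=P_{j+1}\cdots P_n$ is only defined on $V_{n+1}$; the termwise identity $Q_j\Pi^{(n)}_{j+1}f=\sum_i Q_j\Pi^{(n)}_{j+1}f_i$ is therefore not immediate, and your notation ``$\Pi_j|_{V_i}=P_j\cdots P_{i-1}$'' quietly switches to a different operator. The fix is easy: since $f\in V_{n+1}$, the tail $\sum_{i>n+1}f_i=f-\sum_{i\le n+1}f_i$ also lies in $V_{n+1}$, so one may replace the decomposition by $\tilde f_i=f_i$ for $i\le n$ and $\tilde f_{n+1}=\sum_{i\ge n+1}f_i$, and Cauchy--Schwarz against the geometric weights gives $2^{2(n+1)q}\|\tilde f_{n+1}\|_H^2\le C\sum_{i>n}2^{2iq}\|f_i\|_H^2$; then every summand lies in $V_{n+1}$ and your Schur estimate proceeds unchanged. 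You might also note that the weights $2^{qj}$ in the paper's statement of (\ref{condition_Hq_1})--(\ref{condition_Hq_2}) should read $2^{2qj}$ to be consistent with the rescaling $2^{-jq}$ in (\ref{scaling_2naj}); you used the correct weight implicitly in your definition of $S(f)$.
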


%
%
%

Now we define suitable projections $P_j$ from $V_{j+1}$ onto $V_j$ and show that these projections
satisfies (\ref{norm_Pmn1}). Then we show that $\Psi$ which differs from (\ref{scaling_2naj}) only by scaling is also a Riesz basis of $H_0^1 \left( 0, 1 \right)$. 
For $j \geq 2$ we define 
\begin{equation*}
\Gamma_j = \left\{ \phi_{j,k} \right\}_{k \in \mathcal{I}_j} \cup \left\{ \psi_{j,k} \right\}_{k \in \mathcal{I}_j}
\quad {\rm and} \quad
\mathbf{F}_j=\left\langle \Gamma_j, \Gamma_j \right\rangle.
\end{equation*}
Let a set 
\begin{equation} \label{def_hatphi_jk}
\hat{\Gamma}_j = \left\{ \hat{\phi}_{j,k} \right\}_{k \in \mathcal{I}_j} \cup \left\{ \hat{\psi}_{j,k} \right\}_{k \in \mathcal{J}_j}
\end{equation}
be given by
\begin{equation} \label{def_hatphi_jk_2}
\hat{\Gamma}_j = \mathbf{F}_j^{-1} \Gamma_j. 
\end{equation}
Since obviously
\begin{equation*}
\left\langle  \Gamma_j, \hat{\Gamma}_j \right\rangle = \mathbf{I}_j,
\end{equation*}
functions from $\hat{\Gamma}_j$ are duals to functions from $\Gamma_j$ in the space $V_{j+1}$. Since $\mathbf{F}_j^{-1}$ is not a sparse matrix,
these duals are not local.
We define a projection $P_j$ from $V_{j+1}$ onto $V_j$ by 
\begin{equation*}
P_j f = \sum_{k \in \mathcal{I}_j} \left\langle f, \hat{\phi}_{j,k} \right\rangle \phi_{j,k}.
\end{equation*} 

\begin{lem} \label{norm_composition}
There exist $p < 0.5$ such that a projection $P_j$ satisfies
\begin{equation} \label{norm_Pmn}
\left\| P_m P_{m+1} \ldots P_{n-1} \right\| \leq C \, 2^{p \, \left( n-m \right)},
\end{equation}
for all $2 \leq m < n $ and a constant $C$ independent on $m$ and $n$.
\end{lem}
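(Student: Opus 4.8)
The plan is to translate the composition of the projections $P_j$ into a product of the matrices $\tilde{\mathbf{M}}_{j,0}^T$ that have already been estimated in Lemma~\ref{norm_nasobeni_Nj}, and then to transfer that estimate from the sequence space $l^2$ back to $L^2$ using the fact that the scaling bases $\Phi_j$ are $L^2$-Riesz bases with uniformly bounded condition numbers.

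First I would identify the matrix of $P_j$ in scaling-function coordinates. Taking $f \in V_{j+1}$ and writing $f = \mathbf{c}^T \Phi_{j+1}$, Lemma~\ref{decomposition_Phij} gives $\Phi_{j+1} = \tilde{\mathbf{M}}_{j,0} \Phi_j + \tilde{\mathbf{M}}_{j,1} \Psi_j$, hence
\begin{equation*}
f = \left( \tilde{\mathbf{M}}_{j,0}^T \mathbf{c} \right)^T \Phi_j + \left( \tilde{\mathbf{M}}_{j,1}^T \mathbf{c} \right)^T \Psi_j,
\end{equation*}
which exhibits $f$ as the sum of its $V_j$- and $W_j$-components. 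Because the functions $\hat{\phi}_{j,k}$ from (\ref{def_hatphi_jk_2}) satisfy $\langle \phi_{j,l}, \hat{\phi}_{j,k} \rangle = \delta_{k,l}$ and $\langle \psi_{j,l}, \hat{\phi}_{j,k} \rangle = 0$, the definition of $P_j$ selects exactly the $V_j$-component, so $P_j f = ( \tilde{\mathbf{M}}_{j,0}^T \mathbf{c} )^T \Phi_j$. Thus in the $\Phi$-coordinates the action of $P_j$ is multiplication of the coefficient vector by $\tilde{\mathbf{M}}_{j,0}^T$, and by iterating, the composition $P_m P_{m+1} \cdots P_{n-1}$ acting on $V_n$ corresponds to the matrix product $\tilde{\mathbf{M}}_{m,0}^T \tilde{\mathbf{M}}_{m+1,0}^T \cdots \tilde{\mathbf{M}}_{n-1,0}^T$ appearing in Lemma~\ref{norm_nasobeni_Nj}.

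Next I would invoke the Riesz basis property of $\Phi_j$. Since the functions $\phi_{j,k}$ are $L^2$-normalized quadratic B-splines, the sets $\Phi_j$ are Riesz bases of $V_j$ with condition numbers bounded uniformly in $j$; denote the uniform Riesz bounds by $c_\Phi$ and $C_\Phi$. Then for $f = \mathbf{c}^T \Phi_n \in V_n$ one has $\| f \| \geq c_\Phi \| \mathbf{c} \|_2$, while $P_m \cdots P_{n-1} f$ has $\Phi_m$-coefficient vector $\tilde{\mathbf{M}}_{m,0}^T \cdots \tilde{\mathbf{M}}_{n-1,0}^T \mathbf{c}$, whence $\| P_m \cdots P_{n-1} f \| \leq C_\Phi \| \tilde{\mathbf{M}}_{m,0}^T \cdots \tilde{\mathbf{M}}_{n-1,0}^T \|_2 \| \mathbf{c} \|_2$. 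Combining these gives
\begin{equation*}
\left\| P_m P_{m+1} \cdots P_{n-1} \right\| \leq \frac{C_\Phi}{c_\Phi} \left\| \tilde{\mathbf{M}}_{m,0}^T \tilde{\mathbf{M}}_{m+1,0}^T \cdots \tilde{\mathbf{M}}_{n-1,0}^T \right\|_2,
\end{equation*}
and Lemma~\ref{norm_nasobeni_Nj} bounds the right-hand side by $C \, 2^{p(n-m)}$ with $p < 0.5$, which is precisely (\ref{norm_Pmn}).

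Since all of the analytic difficulty (the exponential-decay estimates on the entries of the matrix products) has already been absorbed into Lemma~\ref{norm_nasobeni_Nj}, the only genuine steps remaining are the algebraic identification of the coefficient matrix of $P_j$ as $\tilde{\mathbf{M}}_{j,0}^T$ and the passage between the $L^2$ operator norm and the $l^2$ matrix norm. The point requiring the most care is verifying that $P_j$ really is the oblique projection onto $V_j$ along $W_j$ whose coordinate matrix is $\tilde{\mathbf{M}}_{j,0}^T$ — that is, using the biorthogonality $\langle \Gamma_j, \hat{\Gamma}_j \rangle = \mathbf{I}_j$ together with (\ref{algbior1}) to rule out any other projection — after which the remainder is routine.
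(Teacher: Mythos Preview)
Your proposal is correct and follows essentially the same approach as the paper: identify the coordinate matrix of $P_j$ (via Lemma~\ref{decomposition_Phij} and the biorthogonality $\langle \Gamma_j, \hat{\Gamma}_j \rangle = \mathbf{I}_j$) as $\tilde{\mathbf{M}}_{j,0}^T$, then use the uniform Riesz bounds of the scaling bases $\Phi_j$ to transfer the $l^2$-estimate of Lemma~\ref{norm_nasobeni_Nj} to the $L^2$-operator norm. The paper organizes the first step slightly differently---it computes the transition matrix $\mathbf{S}_j = \langle \hat{\Phi}_j, \Phi_{j+1} \rangle$ directly and then applies Lemma~\ref{decomposition_Phij} to identify it with $\tilde{\mathbf{M}}_{j,0}$---but the argument and its ingredients are the same.
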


\begin{proof}
Let $f \in V_{j+1}$, $a_k^j=\left\langle f, \hat{\phi}_{j,k} \right\rangle$, $\mathbf{a}_j= \left\{ a_k^j \right\}_{k \in \mathcal{I}_j}$, $j \geq 2$, and $\mathbf{S}_j: \mathbf{a}_{j+1} \mapsto \mathbf{a}_j$.
Then 
\begin{eqnarray*}
P_j f &=& \sum_{k \in \mathcal{I}_j} a_k^j \phi_{j,k} = \sum_{k \in \mathcal{I}_j} \left\langle f, \hat{\phi}_{j,k} \right\rangle \phi_{j,k} \\
\nonumber &=& \sum_{k \in \mathcal{I}_j} \sum_{l \in \mathcal{I}_{j+1} } a_l^{j+1} \left\langle \phi_{j+1,l}, \hat{\phi}_{j,k} \right\rangle \phi_{j,k}.
\end{eqnarray*}
Therefore
\begin{equation*}
a_k^j = \sum_{l \in \mathcal{I}_{j+1} } a_l^{j+1} \left\langle \phi_{j+1,l}, \hat{\phi}_{j,k} \right\rangle.
\end{equation*}
Let us denote
\begin{equation*}
S_{l,k}^j = \left\langle \hat{\phi}_{j,k}, \phi_{j+1,l} \right\rangle, \quad \mathbf{S}_j = \left\{ S_{l,k}^j \right\}_{l \in \mathcal{I}_{j+1}, k \in \mathcal{I}_j}
\end{equation*}
then we can write $\mathbf{a}_{j}=\mathbf{S}_j \mathbf{a}_{j+1}$ 
and due to Lemma \ref{decomposition_Phij} we have
\begin{equation*}
\mathbf{S}_j = \left\langle \hat{\Phi}_j, \Phi_{j+1} \right\rangle =  \left\langle \hat{\Phi}_j, \tilde{\mathbf{M}}_{j,0} \Phi_{j} + \tilde{\mathbf{M}}_{j,1} \Psi_{j} \right\rangle= \tilde{\mathbf{M}}_{j,0}.
\end{equation*}


Now, let us consider $f_n \in V_n$ and $f_m= P_m P_{m+1} \ldots P_{n-1} f_n$. Then $f_j$ can be represented by $f_j=\sum_{k \in \mathcal{I}_j} a_k^j \phi_j$ for $j=m, n$ and we set $\mathbf{a}_j= \left\{ a_k^j \right\}_{k \in \mathcal{I}_j}$. Since $\Phi_j$ is a Riesz basis of $V_j$, see \cite{Cerna2011}, there exist constants
$C_1$ and $C_2$ independent of $j$ such that
\begin{equation*}
C_1 \left\| \mathbf{a}_j \right\|_2 \leq \left\| \sum_{k \in \mathcal{I}_j} a_k^j \phi_{j,k} \right\| \leq C_2 \left\| \mathbf{a}_j \right\|_2.
\end{equation*}
Due to Lemma~\ref{norm_nasobeni_Nj} we have
\begin{eqnarray*}
\left\| f_m \right\| & \leq & C_2 \left\| \mathbf{a}_m \right\|_2 \leq C_2 \left\| \mathbf{S}_m \, \mathbf{S}_{m+1}  \ldots \mathbf{S}_{n-1} \right\|_2 \left\| \mathbf{a}_{n} \right\|_2 \\
\nonumber &=& C_2 \left\| \tilde{\mathbf{M}}_{m,0}^T \, \tilde{\mathbf{M}}_{m+1,0}^T  \ldots \tilde{\mathbf{M}}_{n-1,0}^T \right\|_2 \left\| \mathbf{a}_{n} \right\|_2 \\
\nonumber & \leq & C_2 \, 2^{p \left( n - m \right)} \, \left\| \mathbf{a}_n \right\|_2
\leq  C_1^{-1} \, C_2 \, 2^{p\left(n-m\right)} \, \left\| f_n \right\|. 
\end{eqnarray*}
Thus (\ref{norm_Pmn}) is proved.
$\qquad$ \end{proof}

\begin{thm} 
The sets $\Psi_j$ are Riesz bases of the spaces $W_j={\rm span} \ \Psi_j$, $j \geq 2$, with the condition numbers bounded independetly on $j$.
\end{thm}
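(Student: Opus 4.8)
The plan is to exploit the two-scale relation $\Psi_j = \mathbf{M}_{j,1}^T \Phi_{j+1}$ from (\ref{ref_matrices}) to reduce the Riesz basis property of $\Psi_j$ to the already-known Riesz basis property of the scaling system $\Phi_{j+1}$. For any coefficient vector $\mathbf{b} = \{b_k\}_{k \in \mathcal{I}_j}$, linearity gives $\sum_{k} b_k \psi_{j,k} = (\mathbf{M}_{j,1}\mathbf{b})^T \Phi_{j+1}$, so the $L^2$-norm of a wavelet expansion equals the $L^2$-norm of a scaling-function expansion with coefficient vector $\mathbf{c} := \mathbf{M}_{j,1}\mathbf{b}$. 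Equivalently, in terms of Gram matrices,
\[
\langle \Psi_j, \Psi_j \rangle = \mathbf{M}_{j,1}^T \langle \Phi_{j+1}, \Phi_{j+1} \rangle \mathbf{M}_{j,1}.
\]

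The key structural observation is that the columns of $\mathbf{M}_{j,1}$ have pairwise disjoint supports. From (\ref{waveletova_mce}) the $k$-th column of $\mathbf{M}_{j,1}$ has only the two nonzero entries $-\tfrac{1}{2\sqrt{2}}$ and $\tfrac{1}{2\sqrt{2}}$, located in rows $2k-1$ and $2k$; this holds for the boundary columns $k=1$ and $k=2^j$ as well, since by (\ref{waveletova_rce}) the boundary wavelet refines into $\phi_{j+1,1}$ and $\phi_{j+1,2}$ exactly as an interior wavelet refines into two consecutive scaling functions. Consequently $\mathbf{M}_{j,1}^T \mathbf{M}_{j,1}$ is diagonal with every diagonal entry equal to $\left(\tfrac{1}{2\sqrt{2}}\right)^2 + \left(\tfrac{1}{2\sqrt{2}}\right)^2 = \tfrac{1}{4}$, that is $\mathbf{M}_{j,1}^T \mathbf{M}_{j,1} = \tfrac{1}{4}\,\mathbf{I}_j$. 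In particular $\|\mathbf{M}_{j,1}\mathbf{b}\|_2 = \tfrac{1}{2}\|\mathbf{b}\|_2$ for every $\mathbf{b}$, and $\mathbf{M}_{j,1}$ has full column rank, so the functions $\psi_{j,k}$, $k \in \mathcal{I}_j$, are linearly independent and form a basis of $W_j$.

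Next I would invoke the fact, proved in \cite{Cerna2011}, that $\Phi_{j+1}$ is a Riesz basis of $V_{j+1}$ with condition numbers bounded independently of $j$; equivalently, there are constants $0 < C_1 \le C_2$, independent of $j$, with $C_1^2 \le \lambda_{\min}\bigl(\langle \Phi_{j+1}, \Phi_{j+1}\rangle\bigr)$ and $\lambda_{\max}\bigl(\langle \Phi_{j+1}, \Phi_{j+1}\rangle\bigr) \le C_2^2$. Combining this spectral bound with the identity $\|\mathbf{M}_{j,1}\mathbf{b}\|_2^2 = \tfrac{1}{4}\|\mathbf{b}\|_2^2$ yields
\[
\frac{C_1^2}{4}\,\|\mathbf{b}\|_2^2 \;\le\; \mathbf{b}^T \langle \Psi_j, \Psi_j \rangle \mathbf{b} \;\le\; \frac{C_2^2}{4}\,\|\mathbf{b}\|_2^2
\]
for all $\mathbf{b} \in \mathbb{R}^{2^j}$. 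Taking square roots shows that $\Psi_j$ is a Riesz basis of $W_j$ with Riesz bounds $C_1/2$ and $C_2/2$, hence with condition number bounded by $C_2/C_1$, which is precisely the uniform condition number of the scaling basis and is independent of $j$.

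There is no deep obstacle: the only point requiring genuine care is verifying that the orthogonality structure $\mathbf{M}_{j,1}^T\mathbf{M}_{j,1} = \tfrac{1}{4}\mathbf{I}_j$ persists at the two boundaries, where the wavelet is built from $\psi_b$ rather than $\psi$. I would settle this by writing $\psi_{j,1}$ and $\psi_{j,2^j}$ explicitly in the basis $\Phi_{j+1}$ using (\ref{waveletova_rce}) together with the dilation relations (\ref{dilatacephi}) and (\ref{dilatacepsi}), confirming that each boundary column again carries exactly the entries $\pm\tfrac{1}{2\sqrt{2}}$ in two consecutive rows and thus matches the interior pattern displayed in (\ref{waveletova_mce}).
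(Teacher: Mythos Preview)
Your argument is correct but follows a genuinely different route from the paper. The paper computes the Gram matrix $\mathbf{U}_j=\langle\Psi_j,\Psi_j\rangle$ directly: it is tridiagonal with explicit entries (diagonal $27/320$ at the boundaries and $1/12$ in the interior, off-diagonal $47/1920$ and $-1/40$), and strict diagonal dominance together with the Gershgorin circle theorem immediately yields uniform two-sided eigenvalue bounds. Your approach instead factors $\langle\Psi_j,\Psi_j\rangle=\mathbf{M}_{j,1}^T\langle\Phi_{j+1},\Phi_{j+1}\rangle\mathbf{M}_{j,1}$ and exploits the special structure $\mathbf{M}_{j,1}^T\mathbf{M}_{j,1}=\tfrac{1}{4}\mathbf{I}_j$ to inherit Riesz bounds from the scaling system $\Phi_{j+1}$, whose uniform stability is quoted from \cite{Cerna2011}. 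The paper's proof is entirely self-contained and delivers concrete numerical bounds; your argument is more structural and would transfer to any wavelet construction whose refinement matrix has orthogonal columns of equal norm, at the cost of importing the scaling-basis stability as a black box. Both are short and valid; the verification that the boundary columns of $\mathbf{M}_{j,1}$ share the interior pattern, which you flag as the only delicate point, is indeed routine and already visible in the displayed form~(\ref{waveletova_mce}).
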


\begin{proof}
The matrix $\mathbf{U}_j= \left\langle \Psi_j, \Psi_j \right\rangle$ is tridiagonal with entries
\begin{eqnarray*}
\left(\mathbf{U}_j\right)_{1,1} &=& \left(\mathbf{U}_j\right)_{2^j,2^j}=\frac{27}{320}, \\
\nonumber \left(\mathbf{U}_j\right)_{2,1} &=& \left(\mathbf{U}_j\right)_{1,2} = \left(\mathbf{U}_j\right)_{2^j-1,2^j}
= \left(\mathbf{U}_j\right)_{2^j,2^j-1} = \frac{47}{1920}, \\
\nonumber \left(\mathbf{U}_j\right)_{k,k} &=& \frac{1}{12}, \quad k=2, \ldots,  2^j-1, \\
\nonumber \left(\mathbf{U}_j\right)_{k,k+1} &=& \left(\mathbf{U}_j\right)_{k+1,k} = -\frac{1}{40}, \quad k=2, \ldots,  2^j-2, \\
\nonumber \left(\mathbf{U}_j\right)_{k,l} &=& 0, \quad {\rm otherwise}.
\end{eqnarray*}
Thus, $\mathbf{U}_j$ is strictly diagonally dominant and the assertion of the Theorem follows from Remark~\ref{remark1} and Gershgorin circle theorem.
$\qquad$ \end{proof}

\begin{thm} \label{theorem_riesz_1D}
The set 
\begin{equation*} 
\left\{2^{-2 } \phi_{2,k}, k \in \mathcal{I}_2 \right\} \cup \left\{2^{-j} \psi_{j,k}, j \geq 2, k \in \mathcal{I}_j \right\}
\end{equation*}
is a Riesz basis of $H_0^{1} \left( 0, 1 \right)$. 
\end{thm}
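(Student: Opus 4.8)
The plan is to apply Theorem~\ref{theorem_jia} with $H = H_0^1(0,1)$, $q = 1$, $J = 2$, and $V_j = \mathrm{span}\,\Phi_j$, and then to argue that the $H^1$-seminorm normalization used in the statement differs from the $2^{-jq}$ normalization in \eqref{scaling_2naj} only by a bounded factor. First I would verify the structural hypotheses of the theorem. The nestedness $V_j \subset V_{j+1}$ follows from the refinement relation \eqref{ref_matrices}, and density of $\bigcup_{j\ge 2} V_j$ in $H_0^1(0,1)$ is standard for quadratic B-spline spaces. The spaces $\Phi_j$ are $L_2$-Riesz bases of $V_j$ with uniformly bounded condition numbers by the reference \cite{Cerna2011}, and the preceding theorem establishes that $\Psi_j$ are Riesz bases of the complement spaces $W_j = \ker P_j$ with uniformly bounded condition numbers. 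The projections $P_j$ were defined explicitly via the dual functions $\hat\phi_{j,k}$, so that $W_j = \mathrm{span}\,\Psi_j$, and the crucial norm bound \eqref{norm_Pmn1} with exponent $p < 0.5 < 1 = q$ is exactly the content of Lemma~\ref{norm_composition}.

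Next I would verify the two Jackson/Bernstein-type inequalities \eqref{condition_Hq_1} and \eqref{condition_Hq_2} for $H_q = H_0^1(0,1)$ with $q=1$. These are the standard direct and inverse estimates for spline spaces: inequality \eqref{condition_Hq_1} is a Bernstein inequality reflecting that functions in $V_j$ have $H^1$-seminorm controlled by $2^{j}\|f_j\|_{L_2}$, while \eqref{condition_Hq_2} is a Jackson inequality guaranteeing an approximation rate of order one for the quadratic spline spaces. Because the scaling functions are globally $C^1$ quadratic splines vanishing at the endpoints (enforcing the homogeneous Dirichlet condition), both estimates hold with constants independent of $j$; this is classical spline approximation theory on the interval and I would simply cite it rather than reprove it.

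Having checked every hypothesis, Theorem~\ref{theorem_jia} yields directly that the set in \eqref{scaling_2naj}, namely $\{2^{-2q}\phi_{2,k}\} \cup \{2^{-jq}\psi_{j,k}\}$ with $q=1$, is a Riesz basis of $H_0^1(0,1)$. Since $q=1$ this is exactly the set $\{2^{-2}\phi_{2,k}\} \cup \{2^{-j}\psi_{j,k}\}$ appearing in the theorem statement, so no further rescaling argument is even needed and the proof concludes immediately.

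I expect the main obstacle to be the bookkeeping of which facts have already been supplied versus which must be invoked from the literature. The genuinely hard analytic work, the uniform bound on $\|P_m\cdots P_{n-1}\|$ with a subcritical exponent $p<1/2$, has already been discharged in Lemmas~\ref{norm_tildeS_j} through \ref{norm_composition} via the delicate matrix-norm estimates on $\tilde{\mathbf{S}}_j$; the remaining risk is only in confirming that the Jackson and Bernstein inequalities genuinely hold for \emph{these} boundary-adapted spaces with the first-order Dirichlet condition. Since the interior scaling function is a standard quadratic B-spline and the boundary function $\phi_b$ is chosen precisely so that $V_j \subset H_0^1(0,1)$, these inequalities are inherited from the well-developed approximation theory of spline spaces on the interval, and I would treat their verification as routine.
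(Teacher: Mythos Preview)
Your approach is essentially identical to the paper's: apply Theorem~\ref{theorem_jia} with $q=1$, cite the Jackson and Bernstein inequalities for the boundary-adapted quadratic spline spaces (the paper attributes these to the argument in \cite{Jia2011}, valid for $H_q=H_0^q(0,1)$ with $0<q<1.5$), and invoke Lemma~\ref{norm_composition} for the bound \eqref{norm_Pmn1}. One small slip to fix: in the setup of Theorem~\ref{theorem_jia} the space $H$ is the base space $L^2(0,1)$, not $H_0^1(0,1)$, while the target is $H_q=H_0^1(0,1)$; you clearly use the correct $L^2$ norms later (e.g.\ in your Bernstein inequality $\|f_j\|_{H^1}\lesssim 2^j\|f_j\|_{L_2}$), so this is only a labeling error in your first sentence, and your opening remark about seminorm renormalization is likewise unnecessary here, as you yourself note at the end.
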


\begin{proof}
Using the same argument as in \cite{Jia2011} we conclude that (\ref{condition_Hq_1}) and (\ref{condition_Hq_2}) follows from the polynomial exactness of the scaling basis and the smoothness of basis functions and are satisfied for $H=L^2 \left( 0, 1\right)$ and
$H_q = H_0^q \left( 0, 1\right)$, $0<q<1.5$. Due to Lemma~\ref{norm_composition} the condition (\ref{norm_Pmn1}) is fulfilled. Therefore by Theorem~\ref{theorem_jia} the assertion of Theorem~\ref{theorem_riesz_1D} is proved. 
$\qquad$ \end{proof}

\begin{thm} \label{theorem_riesz_1Da}
The set 
\begin{equation*} 
\left\{\phi_{2,k}/ \left|\phi_{2,k}\right|_{H_0^1\left(0,1\right)}, k \in \mathcal{I}_2 \right\} \cup \left\{\psi_{j,k}/ \left|\psi_{j,k}\right|_{H_0^1\left(0,1\right)}, j \geq 2, k \in \mathcal{I}_j \right\},
\end{equation*}
where $\left| \cdot \right|_{H_0^1 \left( 0,1 \right)}$ denotes $H_0^1 \left(0,1\right)$--seminorm,
is a Riesz basis of $H_0^{1} \left( 0, 1 \right)$. 
\end{thm}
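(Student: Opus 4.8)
The plan is to deduce the claim from Theorem~\ref{theorem_riesz_1D} by observing that the two systems differ only by a diagonal rescaling whose factors are bounded above and below by positive constants. I would first record the elementary fact that if $\Psi=\left\{\psi_{\lambda}\right\}_{\lambda\in\mathcal{J}}$ is a Riesz basis of a Hilbert space $H$ with bounds $c_{\Psi},C_{\Psi}$, and $\left\{d_{\lambda}\right\}_{\lambda\in\mathcal{J}}$ is a sequence of reals with $0<c\leq d_{\lambda}\leq C<\infty$, then $\left\{d_{\lambda}\psi_{\lambda}\right\}_{\lambda\in\mathcal{J}}$ is again a Riesz basis of $H$ with bounds $c\,c_{\Psi}$ and $C\,C_{\Psi}$. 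This follows at once from (\ref{riesz}) applied to the coefficient sequence $\left\{d_{\lambda}b_{\lambda}\right\}$, since $c\left\|\mathbf{b}\right\|_2\leq\left\|\left\{d_{\lambda}b_{\lambda}\right\}\right\|_2\leq C\left\|\mathbf{b}\right\|_2$.

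Next I would compute the $H_0^1\left(0,1\right)$--seminorms of the basis functions. Using the dilation formulas (\ref{dilatacephi}) and (\ref{dilatacepsi}), differentiation of a function living at scale $2^{-j}$ produces a factor $2^j$, and the substitution $y=2^j x-k+2$ (respectively $y=2^j x$ or $y=2^j\left(1-x\right)$ for the boundary wavelets) together with the $L^2$--normalization factor $2^{j/2}$ gives
\begin{equation*}
\left|\psi_{j,k}\right|_{H_0^1\left(0,1\right)}=2^{j}\left|\psi\right|_{H^1}, \ k=2,\ldots,2^j-1, \quad
\left|\psi_{j,1}\right|_{H_0^1\left(0,1\right)}=\left|\psi_{j,2^j}\right|_{H_0^1\left(0,1\right)}=2^{j}\left|\psi_b\right|_{H^1}.
\end{equation*}
Thus each wavelet seminorm equals $2^j$ times one of the two fixed positive constants $\left|\psi\right|_{H^1}$, $\left|\psi_b\right|_{H^1}$.

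Now I would write the normalized functions as a rescaling of the basis of Theorem~\ref{theorem_riesz_1D}. For the wavelets,
\begin{equation*}
\frac{\psi_{j,k}}{\left|\psi_{j,k}\right|_{H_0^1\left(0,1\right)}}
=\frac{2^{j}}{\left|\psi_{j,k}\right|_{H_0^1\left(0,1\right)}}\left(2^{-j}\psi_{j,k}\right)
=d_{j,k}\left(2^{-j}\psi_{j,k}\right),
\end{equation*}
where by the previous step $d_{j,k}\in\left\{\left|\psi\right|_{H^1}^{-1},\left|\psi_b\right|_{H^1}^{-1}\right\}$, so the factors $d_{j,k}$ take only finitely many positive finite values. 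The four scaling functions live at the fixed level $j=2$, hence their seminorms $\left|\phi_{2,k}\right|_{H_0^1\left(0,1\right)}$ form a finite set of positive numbers, and $\phi_{2,k}/\left|\phi_{2,k}\right|_{H_0^1\left(0,1\right)}=\left(2^{2}/\left|\phi_{2,k}\right|_{H_0^1\left(0,1\right)}\right)\left(2^{-2}\phi_{2,k}\right)$ is likewise a bounded rescaling. Consequently all rescaling factors lie in a fixed interval $\left[c,C\right]\subset\left(0,\infty\right)$, and the elementary fact above, applied to the Riesz basis of Theorem~\ref{theorem_riesz_1D}, yields the assertion.

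There is essentially no serious obstacle here; the only computational step is the two-sided estimate of the seminorms, which reduces to the observation that every basis function is, up to the scale factor $2^j$, a dilate of one of finitely many reference functions, so that the ratio between $\left|\psi_{j,k}\right|_{H_0^1\left(0,1\right)}$ and $2^{j}$ stays in a fixed compact subinterval of $\left(0,\infty\right)$.
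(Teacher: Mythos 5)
Your proposal is correct and takes essentially the same route as the paper: both establish the two-sided equivalence $\left|\psi_{j,k}\right|_{H_0^1\left(0,1\right)} \asymp 2^j$ and $\left|\phi_{2,k}\right|_{H_0^1\left(0,1\right)} \asymp 2^2$ (the paper as the bounds (\ref{ekvivalence_norem})--(\ref{ekvivalence_norem_2}), you as exact dilation identities) and then transfer the Riesz inequality of Theorem~\ref{theorem_riesz_1D} through the bounded diagonal rescaling. The only cosmetic difference is that you isolate the rescaling step as a standalone lemma, whereas the paper inlines it directly into the coefficient inequalities.
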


{\em Proof}.
We follow the proof of Lemma~2 in \cite{Cerna2014b}. From (\ref{dilatacepsi}) there exist constants $C_1$ and $C_2$ such that
\begin{equation} \label{ekvivalence_norem}
C_1 2^{j} \leq \left| \psi_{j,k} \right|_{H_0^1 \left(\Omega \right)} \leq C_2 2^{j}, \quad {\rm for} \, j \geq 2, \quad k \in \mathcal{I}_j,
\end{equation}
and
\begin{equation} \label{ekvivalence_norem_2}
C_1 2^{2} \leq \left| \phi_{2,k} \right|_{H_0^1 \left(\Omega \right)} \leq C_2 2^{2}, \quad {\rm for} \, k \in \mathcal{I}_2.
\end{equation}
Theorem~\ref{theorem_riesz_1D} implies that there exist constants $C_3$ and $C_4$ such that
\begin{equation} \label{Riesz10}
C_3 \left\| \mathbf{b} \right\|_2 \leq \left\| \sum_{k \in \mathcal{I}_2} a_{2,k} 2^{-2} \phi_{2,k} + \sum_{k \in \mathcal{I}_j, j \geq 2} b_{j,k} 2^{-j} \psi_{j,k} \right\|_{H_0^1 \left(0,1\right)} \leq C_4 \left\| \mathbf{b} \right\|_2,
\end{equation}
for any $\mathbf{b}=\left\{a_{2,k}, \ k \in \mathcal{I}_2 \right\} \cup \left\{b_{j,k}, j \geq 2, \ k \in \mathcal{J}_j \right\}$.
Using (\ref{ekvivalence_norem}), (\ref{ekvivalence_norem_2}), and (\ref{Riesz10}) we obtain 
\begin{equation*}
\left\| \mathbf{b} \right\|_2 \leq \frac{C_2}{C_3} \left\| \sum_{k \in \mathcal{I}_2} a_{2,k} \frac{\phi_{2,k}}{\left| \phi_{2,k} \right|_{H_0^1 \left(\Omega \right)}} + \sum_{k \in \mathcal{J}_j, j \geq 2} b_{j,k} \frac{\psi_{j,k}}{\left| \psi_{j,k} \right|_{H_0^1 \left(\Omega \right)}   } \right\|_{H_0^1 \left(0,1\right)}
\end{equation*}
and 
\begin{equation*} \left\| \mathbf{b} \right\|_2 \geq \frac{C_1}{C_4} \left\| \sum_{k \in \mathcal{I}_2} a_{2,k} \frac{\phi_{2,k}}{\left| \phi_{2,k} \right|_{H_0^1 \left(\Omega \right)}} + \sum_{k \in \mathcal{J}_j, j \geq 2} b_{j,k} \frac{\psi_{j,k}}{\left| \psi_{j,k} \right|_{H_0^1 \left(\Omega \right)}   } \right\|_{H_0^1 \left(0,1\right)}.
\qquad\endproof
\end{equation*}

\begin{thm}
The set $\Psi^{2D}$ normalized with respect to the $H^1$--seminorm is a Riesz basis of $H_0^{1} \left( \left(0, 1\right)^2 \right)$. 
\end{thm}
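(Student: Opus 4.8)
The plan is to apply the abstract multiscale characterization of Theorem~\ref{theorem_jia} directly in the bivariate setting, with the univariate projection $P_j$ from Section~\ref{sectionRB} replaced by its tensor product $P_j \otimes P_j$, and then to pass to the $H^1$-seminorm normalization exactly as in the proof of Theorem~\ref{theorem_riesz_1Da}. Accordingly, I set $H = L^2(\Omega_2)$ and $H_q = H_0^1(\Omega_2)$ with $q=1$, and I let $V_j^{2D} = V_j \otimes V_j = \mathrm{span}\,\{\phi^2_{j,\mathbf{k}} : \mathbf{k} \in \mathcal{I}_j^2\}$. These spaces are nested and their union is dense in $L^2(\Omega_2)$. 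Because the bivariate scaling functions are tensor products of the quadratic B-spline scaling functions, the multiresolution $\{V_j^{2D}\}$ inherits the polynomial exactness of order three and the smoothness of the univariate one, so the Jackson and Bernstein estimates yielding conditions (\ref{condition_Hq_1}) and (\ref{condition_Hq_2}) hold for $0<q<3/2$ by the same argument as in \cite{Jia2011}, in particular for $q=1$. Moreover $\Phi_j^{2D}$ is a Riesz basis of $V_j^{2D}$ with uniformly bounded condition number, since its Gram matrix is the Kronecker product $\langle \Phi_j,\Phi_j\rangle \otimes \langle \Phi_j,\Phi_j\rangle$ and $\Phi_j$ is a uniformly conditioned Riesz basis of $V_j$ by \cite{Cerna2011}.

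Next I take $P_j^{2D} = P_j \otimes P_j$ as the projection from $V_{j+1}^{2D}$ onto $V_j^{2D}$. Splitting $V_{j+1} = V_j \oplus W_j$ gives $V_{j+1}^{2D} = V_j^{2D} \oplus W_j^{2D}$ with $W_j^{2D} = (V_j \otimes W_j) \oplus (W_j \otimes V_j) \oplus (W_j \otimes W_j) = \ker P_j^{2D}$; these three summands are precisely the spaces spanned by the three types of bivariate wavelets $\psi^2_{j,\mathbf{e},\mathbf{k}}$, $\mathbf{e} \in E^2$, so that $\mathrm{span}\,\Psi_j^{2D} = W_j^{2D}$. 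I then need that $\Psi_j^{2D}$ is a Riesz basis of $W_j^{2D}$ with uniformly bounded condition number. This reduces to the uniform stability of the univariate splitting $V_{j+1} = V_j \oplus W_j$, i.e. to $\Gamma_j = \Phi_j \cup \Psi_j$ being a uniformly conditioned Riesz basis of $V_{j+1}$, which follows from the uniform boundedness of $\mathbf{M}_{j,0}$, $\mathbf{M}_{j,1}$ and of their duals $\tilde{\mathbf{M}}_{j,0}$, $\tilde{\mathbf{M}}_{j,1}$ (Lemma~\ref{norm_Mj0} together with the biorthogonality relations (\ref{algbior1}) and (\ref{algbior2})), combined with $\Phi_j$ and $\Psi_j$ being uniform Riesz bases of $V_j$ and $W_j$; tensorizing the stable univariate splitting gives a stable four-fold splitting of $V_{j+1}^{2D}$, whose restriction to the kernel $W_j^{2D}$ is again uniformly stable.

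The composition bound is then immediate from tensorization: since $P_m^{2D} \cdots P_{n-1}^{2D} = (P_m \cdots P_{n-1}) \otimes (P_m \cdots P_{n-1})$ and the operator norm of a Kronecker product on $L^2(\Omega_2)$ is the square of the univariate operator norm, Lemma~\ref{norm_composition} yields $\|P_m^{2D} \cdots P_{n-1}^{2D}\| \leq C^2 \, 2^{2p(n-m)}$ with $2p < 1 = q$, so (\ref{norm_Pmn1}) holds with exponent $2p < q$. Theorem~\ref{theorem_jia} now shows that $\{2^{-2}\phi^2_{2,\mathbf{k}}\} \cup \{2^{-j}\psi^2_{j,\mathbf{e},\mathbf{k}}\}$ is a Riesz basis of $H_0^1(\Omega_2)$. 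Finally, since $\|\psi^2_{j,\mathbf{e},\mathbf{k}}\|_{L^2} \approx 1$ and $|f \otimes g|_{H^1}^2 = |f|_{H^1}^2 \|g\|^2 + \|f\|^2 |g|_{H^1}^2$, one checks that $|\psi^2_{j,\mathbf{e},\mathbf{k}}|_{H^1} \approx 2^j$ and $|\phi^2_{2,\mathbf{k}}|_{H^1} \approx 2^2$ with constants independent of $j$ and $\mathbf{k}$; hence replacing the dyadic weights by the $H^1$-seminorms amounts to a bounded, boundedly invertible diagonal rescaling, and the argument in the proof of Theorem~\ref{theorem_riesz_1Da} shows that the $H^1$-seminorm normalized set $\Psi^{2D}$ is again a Riesz basis of $H_0^1(\Omega_2)$.

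The main obstacle is the second step, namely the uniform Riesz stability of $\Psi_j^{2D}$ in $W_j^{2D}$, equivalently the uniform stability of the univariate two-scale splitting $V_{j+1} = V_j \oplus W_j$: once this is secured, the rest is routine tensorization of facts already established for the univariate basis, and the verification of (\ref{norm_Pmn1}) costs only the identity $\|A \otimes A\| = \|A\|^2$ applied to Lemma~\ref{norm_composition}.
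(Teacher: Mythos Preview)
Your proposal is correct and follows essentially the same route as the paper: define $P_j^{2D}$ as the tensor square of the univariate projection, use the identity $\|A\otimes A\|=\|A\|^2$ together with the univariate composition bound (the paper invokes Lemma~\ref{norm_nasobeni_Nj} at the matrix level, you invoke Lemma~\ref{norm_composition} at the operator level) to get exponent $2p<1=q$, and apply Theorem~\ref{theorem_jia}. Your write-up is in fact more complete than the paper's own proof, which neither discusses the uniform Riesz stability of $\Psi_j^{2D}$ in the kernel space $W_j^{2D}$ nor carries out the final passage from the $2^{-j}$-weighted system to the $H^1$-seminorm normalized one; both of these you handle explicitly.
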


\begin{proof}
Recall that $\hat{\phi}_{j, k}$ are defined by (\ref{def_hatphi_jk}) and (\ref{def_hatphi_jk_2}). For $\mathbf{k}=\left(k_1, k_2 \right)$ let us define $\hat{\phi}_{j, \mathbf{k}}^2=\hat{\phi}_{j, k_1} \otimes \hat{\phi}_{j, k_2}$.
Then for $\mathbf{k}=\left(k_1, k_2 \right)$ and $\mathbf{l}=\left(l_1, l_2 \right)$ we have
\begin{equation*}
\left\langle \phi_{j, \mathbf{k}}^2, \hat{\phi}_{j, \mathbf{l}}^2 \right\rangle = \delta_{k_1, l_1} \delta_{k_2, l_2}, 
\end{equation*}
and $P_j^{2D}$ defined by
\begin{equation*}
P_j^{2D} f = \sum_{\mathbf{k} \in \mathcal{I}_j \times \mathcal{I}_j} \left\langle f , \hat{\phi}^2_{j, \mathbf{k}} \right\rangle \phi_{j, \mathbf{k}}^2
\end{equation*}
is a projection from $V_{j+1}^2$ onto $V_j^2$, where $V_j^2=V_j \otimes V_j$ for $j \geq 2$.
We denote $\mathbf{S}_j^{2D}= \tilde{\mathbf{M}}_{j,0}^T \otimes \tilde{\mathbf{M}}_{j,0}^T$. It is well-known that
for any matrix $\mathbf{B}$ we have
$\left\| \mathbf{B} \otimes \mathbf{B} \right\|_2 = \left\| \mathbf{B} \right\|_2^2$. 
Using this relation and the same arguments as in the proof of Lemma~\ref{norm_composition} we obtain
for $f_n \in V_n^2$ and $f_m= P_m^{2D} P_{m+1}^{2D} \ldots P_{n-1}^{2D} f_n$ the estimate:
\begin{eqnarray*}
\left\| f_m \right\| & \leq & C_1 \left\| \mathbf{a}_m \right\|_2 \leq C_2 \left\| \mathbf{S}_m^{2D} \, \mathbf{S}_{m+1}^{2D}  \ldots \mathbf{S}_{n-1}^{2D} \right\|_2 \left\| \mathbf{a}_{n} \right\|_2 \\
\nonumber &=& C_2 \left\| \left( \tilde{\mathbf{M}}_{m,0}^T \ldots \tilde{\mathbf{M}}_{n-1,0}^T \right) \otimes
\left( \tilde{\mathbf{M}}_{m,0}^T \ldots \tilde{\mathbf{M}}_{n-1,0}^T \right) \right\|_2 \left\| \mathbf{a}_{n} \right\|_2 \\
\nonumber & \leq & C_3 \, 2^{2p \left( n - m \right)} \, \left\| \mathbf{a}_n \right\|_2 \leq  C_4 \, 2^{2p\left(n-m\right)} \, \left\| f_n \right\|
\end{eqnarray*}
with $2p<1$. Hence by Theorem~\ref{theorem_jia} the assertion of the theorem is proved. 
$\qquad$ \end{proof}

\section{Quantitative properties of constructed bases}
\label{quantitative_properties}

In this section, we present the condition numbers of the stiffness matrices for the Helmholtz equation 
\begin{equation} \label{Helmholtz_eq}
- \epsilon \Delta u + a u  = f \, \, \text{ on} \, \, \, \Omega_d, \quad u=0 \, \, \text{on} \, \, \partial \Omega_d,
\end{equation}
where $\Delta$ is the Laplace operator, $\epsilon$ and $a$ are positive constants.

The variational formulation is
\begin{equation} \label{discrete1}
\mathbf{A} \mathbf{u} =\mathbf{f}, 
\end{equation}
where 
\begin{equation*}
\mathbf{A} = \epsilon \left\langle \nabla \Psi, \nabla \Psi \right\rangle + a \left\langle \Psi, \Psi \right\rangle, \quad
\nonumber u = \left( \mathbf{u} \right)^T \Psi, \quad \mathbf{f} = \left\langle f, \Psi \right\rangle. 
\end{equation*}
An advantage of discretization of elliptic equation (\ref{Helmholtz_eq}) using a wavelet basis is that the system (\ref{discrete1}) can be simply preconditioned by a diagonal preconditioner \cite{Dahmen1992}.  Let $\mathbf{D}$ be a matrix of diagonal elements of the matrix $\mathbf{A}$, i.e. $\mathbf{D}_{\lambda, \mu}=
\mathbf{A}_{\lambda, \mu} \delta_{\lambda, \mu}$, where $\delta_{\lambda, \mu}$ denotes Kronecker delta.
Setting
\begin{equation*}
\tilde{\mathbf{A}} = \left(\mathbf{D}\right)^{-1/2} \mathbf{A} \left(\mathbf{D}\right)^{-1/2}, \quad
\nonumber \tilde{\mathbf{u}} = \left(\mathbf{D}\right)^{1/2} \mathbf{u}, \quad \tilde{\mathbf{f}}= \left(\mathbf{D}\right)^{-1/2}\mathbf{f},
\end{equation*}
we obtain the preconditioned system $\tilde{\mathbf{A}} \tilde{\mathbf{u}}=\tilde{\mathbf{f}}$.
It is known \cite{Dahmen1992} that there exist a constant $C$ such that
$\textrm{ cond} \, \tilde{\mathbf{A}} \leq C < \infty$.

Let $\Psi_s$ be defined by (\ref{definition_Psi}) for $d=1$ and similarly for $d>1$. We define
\begin{equation*}
\mathbf{A}_s = \epsilon \left\langle \nabla \Psi_s, \nabla \Psi_s \right\rangle + a \left\langle \Psi_s, \Psi_s \right\rangle, \quad
\nonumber u_s = \left( \mathbf{u}_s \right)^T \Psi_s, \quad \mathbf{f}_s = \left\langle f, \Psi_s \right\rangle.
\end{equation*}
Let $\mathbf{D}_s$ be a matrix of diagonal elements of the matrix $\mathbf{A}_s$, i.e. $\left(\mathbf{D}_s\right)_{\lambda, \mu}=
\left(\mathbf{A}_s\right)_{\lambda, \mu} \delta_{\lambda, \mu}.$
We set
\begin{equation} \label{preconditioned_matrix}
\tilde{\mathbf{A}}_s = \left(\mathbf{D}_s\right)^{-1/2} \mathbf{A}_s \left(\mathbf{D}_s\right)^{-1/2}, \quad
\nonumber \tilde{\mathbf{u}}_s = \left(\mathbf{D}_s\right)^{1/2} \mathbf{u}_s, \quad \tilde{\mathbf{f}}_s = \left(\mathbf{D}_s\right)^{-1/2}\mathbf{f}_s
\end{equation}
and we obtain preconditioned finite-dimensional system
\begin{equation} \label{discrete_finite_problem}
\tilde{\mathbf{A}}_s \tilde{\mathbf{u}}_s=\tilde{\mathbf{f}}_s.
\end{equation}
Since $\tilde{\mathbf{A}}_s$ 
is a part of the matrix $\tilde{\mathbf{A}}$ that is symmetric and positive definite, we have also
\begin{equation*}
\textrm{ cond} \, \tilde{\mathbf{A}}_s \leq C.
\end{equation*}
The condition numbers of the stiffness matrices $\mathbf{A}_s$ for $d=1$ and $d=2$ are shown in Table~\ref{tab1}. 
Although it was not proved in this paper that using appropriate tensorising of 1D wavelet basis we obtain wavelet basis
in 3D, we listed the condition numbers of the stiffness matrices $\mathbf{A}_s$ for 3D case in Table~\ref{tab2}. 
The condition numbers for several constructions of quadratic spline wavelet bases and various values of parameters $\epsilon$ and $a$ are compared in Table~\ref{tab_srovnani}. $CF_2$ denotes the construction from this paper with the coarsest level $2$, $CF_3$ denotes the construction from this paper with the coarsest level $3$. $CF_2^{ort}$ and $CF_3^{ort}$ are bases from this paper with the orthogonalization of the scaling functions on the coarsest level. $P_2$ and $P_3$ refers to quadratic spline wavelet bases adapted to homogeneous Dirichlet boundary condition from \cite{Primbs2010} and $D_2$ and $D_3$ refers to bases from \cite{Dijkema2009}.

\begin{table} [!ht] 
\caption{The condition numbers of the stiffness matrices $\mathbf{A}_s$ of the size $N \times N$ corresponding to multiscale wavelet bases with $s$ levels of wavelets for the one-dimensional and the two-dimensional Poisson equation. }
\label{tab1} 
\begin{tabular}{ r | r  r  r   r |   r  r  r  r }
 & \multicolumn{4}{c|}{1D } & \multicolumn{4}{|c}{2D } \\
$s$ & $N$ & $\lambda_{min}$ & $\lambda_{max}$ & ${\rm cond} \mathbf{A}_s$ & $N$ & $\lambda_{min}$ & $\lambda_{max}$ & ${\rm cond} \mathbf{A}_s$ \\
\hline 
1 & 8   & 1.38 & 0.50 & 2.77 &        64 & 0.25 & 1.88  &   7.5  \\
2 & 16  & 1.41 & 0.50 & 2.83 &       256 & 0.19 & 2.08  &  11.1 \\
3 & 32  & 1.42 & 0.50 & 2.83 &     1 024 & 0.16 & 2.17  &  13.7  \\
4 & 64  & 1.42 & 0.50 & 2.84 &     4 096 & 0.14 & 2.20  &  15.4  \\
5 & 128 & 1.42 & 0.50 & 2.84 &    16 384 & 0.13 & 2.22  &  16.6  \\
6 & 256 & 1.42 & 0.50 & 2.84 &    65 536 & 0.13 & 2.23  &  17.4  \\
7 & 512 & 1.42 & 0.50 & 2.84 &   262 144 & 0.12 & 2.23  &  17.9   \\
8 & 1024& 1.42 & 0.50 & 2.84 & 1 048 576 & 0.12 & 2.23  &  18.3  \\
\end{tabular}
\end{table}

\begin{table} [!ht]  
\caption{The condition numbers of the stiffness matrices $\mathbf{A}_s$ of the size $N \times N$ corresponding to multiscale wavelet bases with $s$ levels of wavelets for the three-dimensional Poisson equation. }
\label{tab2} 
\begin{tabular}{ r|  r  r  r   r }
$s$ & $N$ & $\lambda_{min}$ & $\lambda_{max}$ & ${\rm cond} \mathbf{A}_s$ \\
\hline 
1 &     512 & 0.15 & 3.23 & 47.4 \\
2 &    4096 & 0.04 & 3.69 & 85.0 \\
3 &   32768 & 0.03 & 3.83 & 113.8 \\
4 &  262144 & 0.03 & 3.87 & 132.9 \\
5 & 2097152 & 0.03 & 3.89 & 145.3 \\
\end{tabular}
\end{table}

\begin{table} [!ht]  
\caption{The condition numbers of the stiffness matrices $\mathbf{A}_s$ of the size $65 536 \times 65 536$ for several choices of $\epsilon$ and $a$ for our bases and bases from \cite{Dijkema2009, Primbs2010}.}
\label{tab_srovnani}   
\begin{tabular}{ r  r|  r  r  r   r   r  r  r  r }
$\epsilon$ & $a$ & $CF_2$ & $CF_3$ & $CF_2^{ort}$ & $CF_3^{ort}$ & $P_2$ & $P_3$ & $D_2$ & $D_3$\\
\hline 
1000 & 1 & 17.4 &16.3 & 17.1 & 16.4 & 116.3 & 98.5 & 116.3 & 98.4\\
1 & 0 & 17.4 & 16.7 & 17.1 & 16.4 &   116.3 & 98.5& 116.3 & 98.4 \\
1 & 1 & 17.4 & 16.7 & 17.1 & 16.4 &  116.6 & 98.5 & 116.6 & 98.5 \\
$10^{-3}$ & 1 & 72.1 & 35.9 & 35.6 & 22.5  & 328.1 & 139.2 & 328.1 & 139.2 \\
$10^{-6}$ & 1 & 746.0 & 577.0 & 425.7 & 287.6 & 1878.0 & 1115.4 & 1878.0 & 1115.4\\
$0$       & $1$ & 872.6 & 687.4 & 511.0 & 351.5  & 2034.5 & 1251.3 & 2034.6 & 1251.4 \\
\end{tabular}
\end{table}

\section{Numerical example}

The constructed wavelet basis can be used for solving various types of problems. Let us mention for example solving partial differential and integral equations by adaptive wavelet method \cite{Cohen2001, Cohen2002}. In this section we use constructed wavelet basis in the wavelet-Galerkin method and an adaptive wavelet method. 

\subsection{Multilevel Galerkin method}
We consider the problem (\ref{Helmholtz_eq}) with $\Omega_2$, $\epsilon=1$ and $a=0$. The right-hand side $f$ is such that the solution $u$ is given by:
\begin{equation} \label{example_solution}
u \left(x,y\right)=v\left( x\right) v \left( y \right), \quad
v \left( x\right) = x \left( 1 - e^{50x-50} \right).
\end{equation}
We discretize the equation using the Galerkin method with wavelet basis constructed in this paper and we obtain
discrete problem $\tilde{\mathbf{A}}_s \tilde{\mathbf{u}}_s = \tilde{\mathbf{f}}_s$. We solve it by conjugate gradient method using a simple multilevel approach similarly as in \cite{Cerna2015, Jia2011}: 

\medskip
1. Compute $\tilde{\mathbf{A}}_s$ and $\tilde{\mathbf{f}}_s$, choose $\mathbf{v}_0$ of the length $4^{2}$.

\medskip
2. For $j=0, \ldots, s$ find the solution $\tilde{\mathbf{u}}_j$ of the system $\tilde{\mathbf{A}}_j \tilde{\mathbf{u}}_j = \tilde{\mathbf{f}}_j$ by conjugate gradient method with initial vector $\mathbf{v}_j$ defined for $j \geq 1$ by
      \begin{equation*}
      \left(\mathbf{v}_j\right)=\left\{ \begin{array}{cl}
      \tilde{\mathbf{u}}_{j-1}, & i=1, \ldots, k_{j}, \\
      0, & i= k_{j}, \ldots, k_{j+1}, \\
      \end{array} \right.
      \end{equation*}
      where $k_{j}=2^{2 \left( j +1\right)}$.
\bigskip

Let $u$ be the exact solution of (\ref{Helmholtz_eq}) and 
\begin{equation*}
u^*_s= \left( \tilde{\mathbf{u}}^*_s\right)^T \left( \mathbf{D}_s\right)^{-1/2} \Psi_s,
\end{equation*}
where $\tilde{\mathbf{u}}^*_s$ is the exact solution of the discrete problem (\ref{discrete_finite_problem}). It is known \cite{Urban2009} that 
\begin{equation} \label{estimate_convergence_rate}
\left\| u - u^*_s \right\|_{H^m\left( \Omega \right)} \leq C 2^{-\left(3-m\right)s}.
\end{equation}
Let $u_s$ be an approximate solution obtained by multilevel Galerkin method with $s$ levels of wavelets.
It was shown in \cite{Cerna2015a, Cerna2015b} that 
if we use the criterion for terminating iterations $\left\| \mathbf{r}_s \right\|_2 \leq C 2^{-2s}$, where $\mathbf{r}_s:= \tilde{\mathbf{A}}_s \tilde{\mathbf{u}}_s - \tilde{\mathbf{f}}_s$, then we achieve for $u_s$ the same convergence
rate as for $u^*_s$. In our example, for the given number of levels $s$ we use the criterion $\left\| \mathbf{r}_j \right\|_2 \leq 10^{-4} 2^{-2s}$, $j=0, \ldots, s,$ for terminating iterations in each level. 

We denote the number of iterations on the level $j$ as $M_j$. It is known \cite{Urban2009} that employing the discrete wavelet transform one CG iteration can be performed with complexity of the order $\mathcal{O} \left(N\right)$, where $N \times N$ is the size of the matrix. Therefore the number of operations needed to compute one CG iteration on the level $j$ requires about one quarter of operations needed to compute one CG iteration on the level $j+1$, we compute the total number of equivalent iterations by
\begin{equation*}
M = \sum_{j=0}^s \frac{M_j}{4^{s-j}}.
\end{equation*}
The results are listed in Table~\ref{tab6}. It can be seen that the number of conjugate gradient iterations is quite small and that 
\begin{equation*}
\frac{\left\|u_s - u\right\|_{\infty}}{\left\|u_{s+1} - u\right\|_{\infty}} \approx
\frac{\left\|u_s - u\right\|}{\left\|u_{s+1} - u\right\|} \approx \frac{1}{8},
\end{equation*}
i.e. that the order of convergence is $3$. It corresponds to (\ref{estimate_convergence_rate}). In Table~\ref{tab6}
$CF$ denotes the construction in this paper, $D, P$ denote constructions from \cite{Dijkema2009, Primbs2010}. The constructions from \cite{Dijkema2009} and \cite{Primbs2010} differ only in the definition of boundary wavelets and the results were the same.

\begin{table} [!ht] 
\caption{Number of iterations and error estimates for multilevel conjugate gradient method.}
\label{tab6}  
\begin{tabular}{r  r |r r r |r r r }
\hline
 & & & $CF$ & & & $P, D$ &  \\
 $s$  & $N$ & $M$ & $\left\|u_s - u\right\|_{\infty}$ & $\left\| u_s - u \right\| $ & $M$ & $\left\|u_s - u\right\|_{\infty}$ & $\left\| u_s - u \right\| $ \\
\hline 
0 &       16 & 10.00 & 5.42e-1 & 1.01e-1 & 10.00 & 5.42e-1 & 1.01e-1   \\
1 &       64 & 18.50 & 3.19e-1 & 4.54e-2 & 27.50 & 3.19e-1 & 4.54e-2   \\
2 &       256 & 21.63 & 1.32e-1 & 1.26e-3& 48.88 & 1.32e-1 & 1.26e-3   \\
3 &     1 024 & 23.66 & 2.60e-2 & 2.02e-3& 59.22 & 2.60e-2 & 2.02e-3   \\
4 &     4 096 & 23.00 & 2.91e-3 & 2.45e-4& 59.38 & 2.91e-3 & 2.45e-4   \\
5 &    16 384 & 20.89 & 4.06e-4 & 2.89e-5& 50.76 & 4.06e-4 & 2.89e-5   \\
6 &    65 536 & 18.37 & 5.35e-5 & 3.41e-6& 39.44 & 5.35e-5 & 3.41e-6   \\
7 &   262 144 & 15.68 & 6.82e-6 & 4.23e-7& 29.92 & 6.84e-6 & 4.23e-7   \\
8 & 1 048 576 & 13.02 & 8.63e-7 & 5.28e-8& 21.50 & 8.64e-7 & 5.29e-8   \\
9 & 4 194 304 & 10.35 & 1.08e-7 & 6.59e-9& 17.66 & 1.09e-7 & 6.73e-9   \\
\hline
\end{tabular}
\end{table}

\subsection{Adaptive wavelet method}

We compare the quantitative behavior of the adaptive wavelet method with our bases and bases from \cite{Dijkema2009, Primbs2010}. We consider the equation (\ref{Helmholtz_eq}) with $\epsilon=1$ and $a=0$ for $\Omega_2$ with the solution $u$ given by (\ref{example_solution}).
Then the solution exhibits a sharp gradient near the point $[1,1]$. 
We solve the problem by the adaptive wavelet method proposed in \cite{Cohen2001, Cohen2002} with the matrix-vector multiplication from \cite{Cerna2013}. The coarsest level of the wavelet basis is $j_0=2$ and we use wavelets up to the scale $\left| \lambda \right| \leq 10$. The convergence history is shown in Figure~\ref{comparison_adaptive}.
It can be seen that the convergence rate is similar for all bases. However, the number of iterations needed to resolve the problem with desired accuracy is significantly smaller for the new wavelet basis. Moreover, due to the shorter support of the wavelets, the stiffness matrix is sparser and thus one iteration requires smaller number of operations.  
The number of iterations is much larger in comparison with the results obtained by the multilevel Galerkin method in Table \ref{tab6}, but the number of basis functions is significantly smaller. 

\begin{figure} [t] 
  \includegraphics[height=.2\textheight]{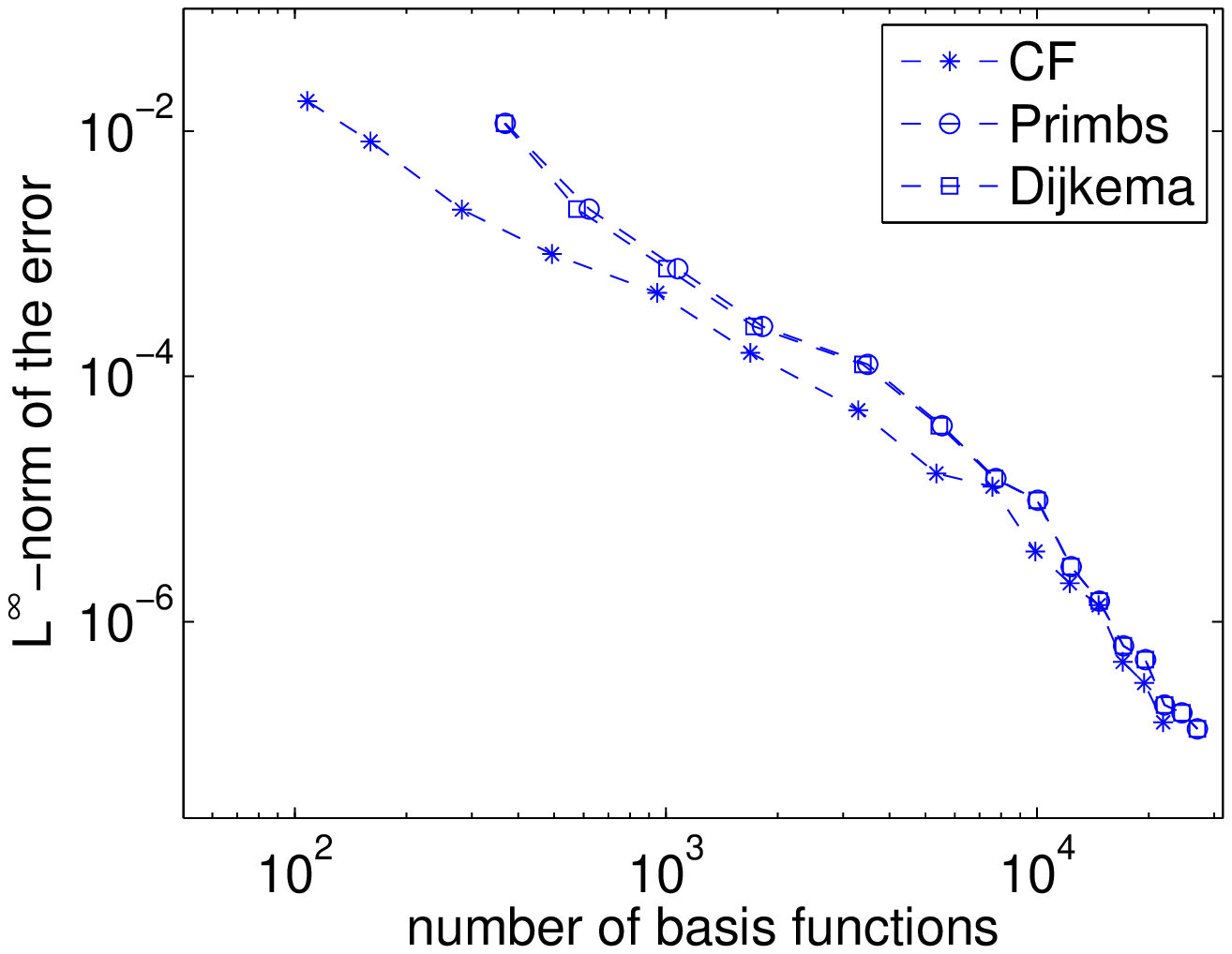} \qquad
  \includegraphics[height=.2\textheight]{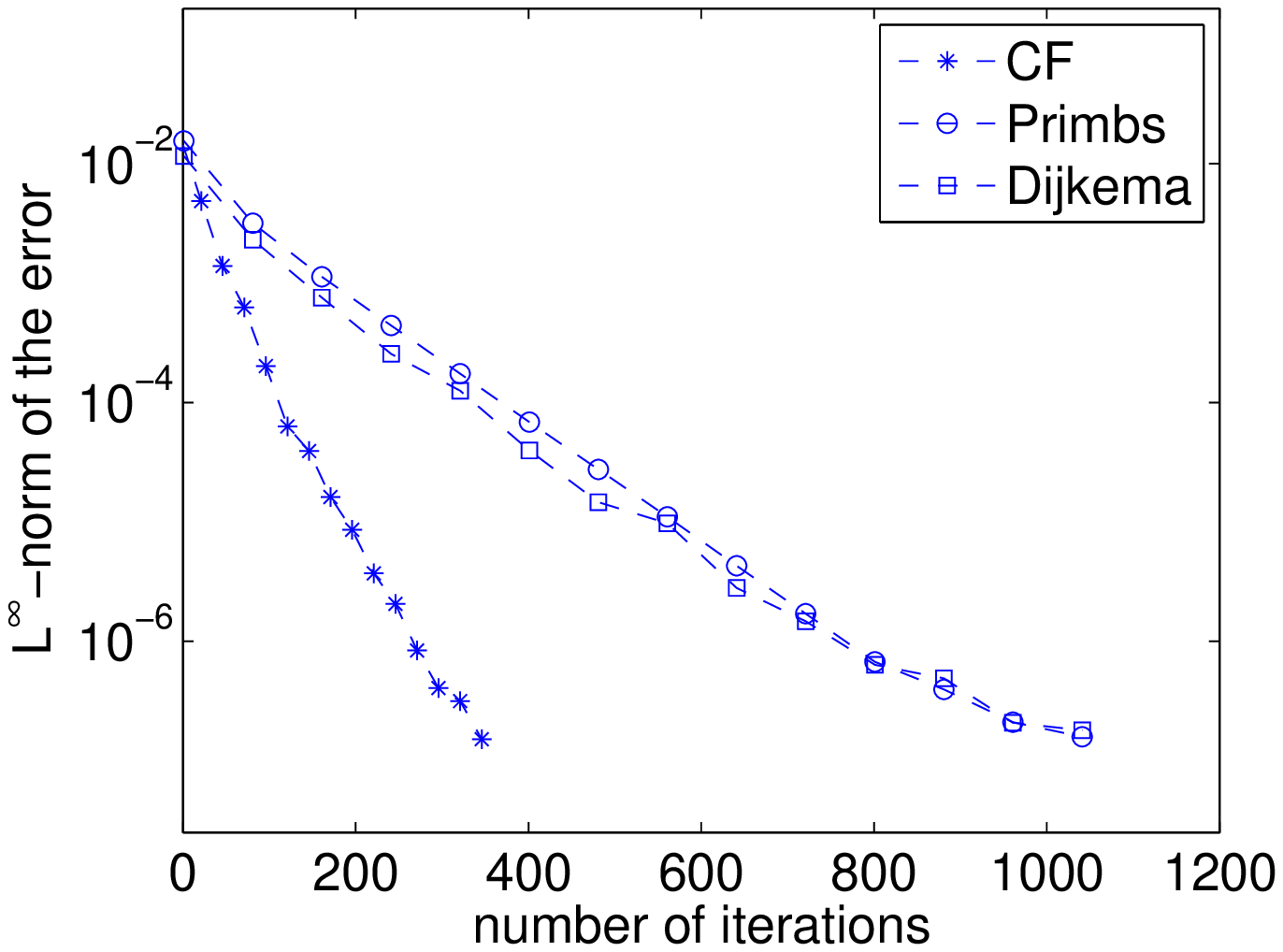}
  \caption{The convergence history for adaptive wavelet scheme with various wavelet bases.}
  \label{comparison_adaptive}
\end{figure}

\end{document}